\newtheorem{theorem}{Theorem}[section] 
 \newtheorem{lemma}[theorem]{Lemma}
\theoremstyle{definition} 
 \newtheorem{remark}[theorem]{Remark}
\newcommand{\suppr}{\hfuzz=100pt \vfuzz=100pt} \suppr
\newcommand{\re}{\textup{Re}}
\newcommand{\im}{\textup{Im}}
\newcommand{\e}{\textup{e}}
\renewcommand{\i}{\textup{i}}
\newcommand{\NN}{{\mathbb{N}}}
\newcommand{\spQ}{{\mathbb{Q}}}
\newcommand{\spH}{{\mathbb{H}}}
\newcommand{\spT}{{\mathbb{T}}}
\newcommand{\nn}{{\boldsymbol n}}
\newcommand{\balpha}{{\boldsymbol\alpha}}
\newcommand{\spQQ}{{\pmb{\spQ}}}
\newcommand{\param}{{\alpha}}
\newcommand{\fracparam}{{\alpha/n}}
\DeclareMathOperator{\ess}{ess}
\DeclareMathOperator{\rank}{rank}
\begin{document}

\title{Complementary results for the spectral analysis of matrices in Galerkin methods with GB-splines}
\author{Fabio Roman\footnote{University of Torino, Dipartimento di Matematica, Via Carlo Alberto 10, 10123 Torino. Tel: +39 011 6702827; Fax: +39 011 6702878; e-mail: fabio.roman@unito.it}}
\date{}

\maketitle

\section{Introduction}

In this short note we collect some results relative to the study of the spectral analysis of matrices in Galerkin methods based on generalized B-splines with high smoothness, depicted in \cite{galerkin_gbsp}. \\
They are the generalization of the ones reported in \cite{GMPSS14} for the polynomial case, which were studied also in the generalized context, but not inserted into \cite{galerkin_gbsp}.

\section{Estimates for the minimal eigenvalues} \label{sec:mineig} 

In this subsection we provide estimates for the minimal eigenvalues of $ M_{n,p}^{\spQ_\mu} $ and $ K_{n,p}^{\spQ_\mu} $. These estimates will be employed to obtain a lower bound for $ | \lambda_{\min} (A_{n,p}^{\spQ_\mu}) | $, where $ \lambda_{\min} (A_{n,p}^{\spQ_\mu}) $ is an eigenvalue of $ A_{n,p}^{\spQ_\mu} $ with minimum modulus. \\
We begin by generalizing \cite[Eq.~(51)]{GMPSS14}. We remember that the result declares how, for every $ p \geq 1, n \geq 2 $, and $ \mathbf{x} = (x_1, \ldots, x_{n+p-2}) \in \mathbb{R}^{n+p-2}, $, it holds
\begin{equation} \label{eq:poly51_orig}
C_p \frac{\Vert \mathbf{x} \Vert^2}{n} 
\leq \left \Vert \sum_{i=1}^{n+p-2} x_i N_{i+1,p} \right \Vert^2_{L_2([0,1])}
\leq \overline{C}_p \frac{\Vert \mathbf{x} \Vert^2}{n},
\end{equation}
where the constants $ C_p, \overline{C}_p > 0 $ do not depend on $n$ and $\mathbf{x}$ (see also \cite[Eq.~(6.3) and Theorem 9.27]{Sch}. It is possible to infer that, for every $ i = 2, \ldots, n+p-1 $, $ x \in \hbox{int}(\hbox{supp}(N_{i,p}(x))) $, there hold
\begin{align}
K_p^{\spQ_\param} & \leq \frac{N_{i,p}^{\spQ_{n\param}}(x)}{N_{i,p}(x)} \leq \overline{K}_p^{\spQ_\param}
\label{eq:gb_b_ratio_nn}, \\
K_p^{\spQ_{[0,\param]}} & \leq \frac{N_{i,p}^{\spQ_\param}(x)}{N_{i,p}(x)} \leq
\overline{K}_p^{\spQ_{[0,\param]}} \label{eq:gb_b_ratio_n}.
\end{align}
where we use the notation $ \param^* = \frac{\param}{\lfloor \frac{\param}{\pi} \rfloor + 1} $ and
\begin{align*}
K_p^{\spH_{[0,\param]}} &:= \inf_{\overline{\param} \in (0,\param]} K_p^{\spH_{\overline{\param}}}
                        \leq \inf_{n \in \NN_0} K_p^{\spH_\fracparam}, \\
K_p^{\spT_{[0,\param]}} &:= \inf_{\overline{\param} \in (0,\param^*]} K_p^{\spT_{\overline{\param}}}
                        \leq \inf_{n \geq \lfloor \frac{\param}{\pi} \rfloor + 1} K_p^{\spT_\fracparam}, \\
    \overline{K}_p^{\spH_{[0,\param]}} 
&:= \sup_{\overline{\param} \in (0,\param]} K_p^{\spH_{\overline{\param}}}
    \geq \sup_{n \in \NN_0} K_p^{\spH_\fracparam}, \\
    \overline{K}_p^{\spT_{[0,\param]}} 
&:= \sup_{\overline{\param} \in (0,\param^*]} K_p^{\spT_{\overline{\param}}}
    \geq \sup_{n \geq \lfloor \frac{\param}{\pi} \rfloor + 1} K_p^{\spT_\fracparam}.
\end{align*}
Indeed, $ N_{i,p}^{\spQ_{n\param}}(x) $ and $ N_{i,p}(x) $ are zero in the same points, and the zeros in their supports (at the boundaries) have the same order; furthermore, $ N_{i,p}^{\spQ_\param}(x) \rightarrow N_{i,p}(x) $ while $ n \rightarrow \infty $.

As a consequence, with norms in $ L_2([0,1]) $
\begin{align}
       \left \Vert \sum_{i=1}^{n+p-2} x_i K_p^{\spQ_\param} N_{i+1,p} \right \Vert^2
& \leq \left \Vert \sum_{i=1}^{n+p-2} x_i N_{i+1,p}^{\spQ_{n\param}} \right \Vert^2
  \leq \left \Vert \sum_{i=1}^{n+p-2} x_i \overline{K}_p^{\spQ_\param} N_{i+1,p}\right\Vert^2, \\
       \left \Vert \sum_{i=1}^{n+p-2} x_i K_p^{\spQ_{[0,\param]}} N_{i+1,p} \right \Vert^2
& \leq \left \Vert \sum_{i=1}^{n+p-2} x_i N_{i+1,p}^{\spQ_\param} \right \Vert^2
  \leq \left \Vert \sum_{i=1}^{n+p-2} x_i \overline{K}_p^{\spQ_{[0,\param]}} N_{i+1,p}\right\Vert^2,
\end{align}
and so, by setting $ C_p^{\spQ_\param} := (K_p^{\spQ_\param})^2 C_p $, $ \overline{C}_p^{\spQ_\param} := (\overline{K}_p^{\spQ_\param})^2 \overline{C}_p $, $ C_p^{\spQ_{[0,\param]}} := (K_p^{\spQ_{[0,\param]}})^2 C_p $, $ \overline{C}_p^{\spQ_{[0,\param]}} := (\overline{K}_p^{\spQ_{[0,\param]}})^2 \overline{C}_p $
\begin{align}
        C_p^{\spQ_\param} \frac{\Vert \mathbf{x} \Vert^2}{n}
& \leq \left \Vert \sum_{i=1}^{n+p-2} x_i N_{i+1,p}^{\spQ_{n\param}} \right \Vert^2
  \leq \overline{C}_p^{\spQ_\param} \frac{\Vert \mathbf{x} \Vert^2}{n}, \label{eq:poly51_nn} \\
        C_p^{\spQ_{[0,\param]}} \frac{\Vert \mathbf{x} \Vert^2}{n}
& \leq \left \Vert \sum_{i=1}^{n+p-2} x_i N_{i+1,p}^{\spQ_\param} \right \Vert^2
  \leq  \overline{C}_p^{\spQ_{[0,\param]}} \frac{\Vert \mathbf{x} \Vert^2}{n}. \label{eq:poly51_n}
\end{align}
%

%
We also recall the Poincar\'e inequality in the one-dimensional setting:

\begin{equation} \label{eq:poly52}
\Vert v \Vert_{L_2([0,1])} \leq \frac{1}{\pi} \Vert v' \Vert_{L_2([0,1])},
\qquad \forall v \in H_0^1([0,1]),
\end{equation}

where $ \frac{1}{\pi} $ is the best constant, see \cite{bottcher07}. \\
We can use (\ref{eq:poly51_nn})-(\ref{eq:poly51_n}) and (\ref{eq:poly52}) to prove the next theorem, by defining $ C_p^{\spQ_\nu} := C_p^{\spQ_\param} $ in the non-nested case, $ C_p^{\spQ_\nu} := C_p^{\spQ_{[0,\param]}} $ in the nested case.

\begin{theorem} \label{th:poly08}
Let $ C_p^{\spQ_\nu} \geq 0 $ be the constant in (\ref{eq:poly51_nn}) or (\ref{eq:poly51_n}), then for all $ p \geq 2 $ and $ n \geq 2 $ the following properties hold.

1. $ \lambda_{\min}(M_{n,p}^{\spQ_\mu}) \geq C_p^{\spQ_\nu} $,

2. $ K_{n,p}^{\spQ_\mu} \geq \frac{\pi^2}{n^2} M_{n,p}^{\spQ_\mu} $ and $ \lambda_{\min}(K_{n,p}^{\spQ_\mu}) \geq \frac{\pi^2 C_p^{\spQ_\nu}}{n^2} $.
\end{theorem}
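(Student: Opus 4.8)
The plan is to translate the two claimed eigenvalue bounds into statements about quadratic forms and then invoke the norm equivalence \eqref{eq:poly51_nn}--\eqref{eq:poly51_n} together with the Poincar\'e inequality \eqref{eq:poly52}. Recall that $M_{n,p}^{\spQ_\mu}$ is the mass matrix whose $(i,j)$ entry is $\langle N_{i+1,p}^{\spQ_\mu}, N_{j+1,p}^{\spQ_\mu}\rangle_{L_2([0,1])}$ and $K_{n,p}^{\spQ_\mu}$ is the corresponding stiffness matrix with entries $\langle (N_{i+1,p}^{\spQ_\mu})', (N_{j+1,p}^{\spQ_\mu})'\rangle_{L_2([0,1])}$, so that for any $\mathbf{x}\in\RR^{n+p-2}$ one has the identities $\mathbf{x}^T M_{n,p}^{\spQ_\mu}\mathbf{x} = \bigl\Vert \sum_i x_i N_{i+1,p}^{\spQ_\mu}\bigr\Vert_{L_2}^2$ and $\mathbf{x}^T K_{n,p}^{\spQ_\mu}\mathbf{x} = \bigl\Vert \bigl(\sum_i x_i N_{i+1,p}^{\spQ_\mu}\bigr)'\bigr\Vert_{L_2}^2$. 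The spectral normalization understood here (as in \cite{GMPSS14}) is the one for which \eqref{eq:poly51_nn}--\eqref{eq:poly51_n} hold with a factor $1/n$ absorbed, i.e.\ after the customary rescaling the lower bound reads $\mathbf{x}^T M_{n,p}^{\spQ_\mu}\mathbf{x}\ge C_p^{\spQ_\nu}\Vert\mathbf{x}\Vert^2$.

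For Part 1, I would argue as follows. Fix $\mathbf{x}\neq\bfzero$ and set $v:=\sum_{i=1}^{n+p-2} x_i N_{i+1,p}^{\spQ_\mu}$. Depending on whether we are in the non-nested or nested case, $N_{i+1,p}^{\spQ_\mu}$ equals $N_{i+1,p}^{\spQ_{n\param}}$ or $N_{i+1,p}^{\spQ_\param}$, so the appropriate instance of \eqref{eq:poly51_nn} or \eqref{eq:poly51_n} applies and gives $\Vert v\Vert_{L_2}^2\ge C_p^{\spQ_\nu}\,\Vert\mathbf{x}\Vert^2/n$; with the convention on $C_p^{\spQ_\nu}$ fixed just before the theorem this is exactly $\mathbf{x}^T M_{n,p}^{\spQ_\mu}\mathbf{x}\ge C_p^{\spQ_\nu}\Vert\mathbf{x}\Vert^2$ after the normalization. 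Taking the infimum over unit vectors $\mathbf{x}$ and using the Rayleigh quotient characterization of $\lambda_{\min}$ for the symmetric positive (semi)definite matrix $M_{n,p}^{\spQ_\mu}$ yields $\lambda_{\min}(M_{n,p}^{\spQ_\mu})\ge C_p^{\spQ_\nu}$.

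For Part 2, the key point is that every function $v$ in the span of $\{N_{i+1,p}^{\spQ_\mu}: i=1,\dots,n+p-2\}$ vanishes at the endpoints $0$ and $1$ (these are precisely the interior GB-spline functions, the boundary ones $N_{1,p}^{\spQ_\mu}$ and $N_{n+p,p}^{\spQ_\mu}$ having been dropped) and is $H^1$, hence lies in $H_0^1([0,1])$. Applying the Poincar\'e inequality \eqref{eq:poly52} to such $v$ gives $\Vert v\Vert_{L_2}^2\le \tfrac{1}{\pi^2}\Vert v'\Vert_{L_2}^2$, i.e.\ $\mathbf{x}^T M_{n,p}^{\spQ_\mu}\mathbf{x}\le \tfrac{1}{\pi^2}\,\mathbf{x}^T K_{n,p}^{\spQ_\mu}\mathbf{x}$ for all $\mathbf{x}$; however, one must be careful about the $n$-dependent scaling relating the mass and stiffness matrices (differentiation on a mesh of width $1/n$ introduces a factor $n$ in $v'$, hence $n^2$ at the level of the quadratic form), which produces the stated $\pi^2/n^2$ rather than $\pi^2$. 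This gives the matrix inequality $K_{n,p}^{\spQ_\mu}\ge \tfrac{\pi^2}{n^2}M_{n,p}^{\spQ_\mu}$ in the L\"owner order, and combining it with Part 1 via monotonicity of $\lambda_{\min}$ under the L\"owner order gives $\lambda_{\min}(K_{n,p}^{\spQ_\mu})\ge \tfrac{\pi^2}{n^2}\lambda_{\min}(M_{n,p}^{\spQ_\mu})\ge \tfrac{\pi^2 C_p^{\spQ_\nu}}{n^2}$.

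The main obstacle I anticipate is bookkeeping the various normalizations consistently: the $1/n$ in \eqref{eq:poly51_nn}--\eqref{eq:poly51_n}, the definition of $C_p^{\spQ_\nu}$ fixed right before the theorem, and the $n^2$ scaling between $M_{n,p}^{\spQ_\mu}$ and $K_{n,p}^{\spQ_\mu}$ must all line up so that the constants come out exactly as in the statement; the verification that the relevant GB-splines span a subspace of $H_0^1([0,1])$ is a routine fact about GB-splines (same support structure and boundary vanishing as ordinary B-splines) but should be stated explicitly since it is what licenses the use of \eqref{eq:poly52}.
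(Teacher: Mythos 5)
Your proposal is correct and follows exactly the route the paper intends: the paper's own proof is a one-line reference to the polynomial case (\cite[Theorem 8]{GMPSS14}), invoking symmetry of $M_{n,p}^{\spQ_\mu}$ and $K_{n,p}^{\spQ_\mu}$ together with (\ref{eq:poly51_nn})--(\ref{eq:poly52}), which is precisely the Rayleigh-quotient plus Poincar\'e argument you spell out. The normalization bookkeeping you flag is resolved by the convention (inherited from the cited works) that $M_{n,p}^{\spQ_\mu}$ carries a factor $n$ and $K_{n,p}^{\spQ_\mu}$ a factor $1/n$ relative to the raw $L_2$ inner products, which is exactly what turns the $1/n$ in the norm equivalence into the stated constants and produces the $\pi^2/n^2$ in Part 2.
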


\begin{proof}
The proof is analogous to the one referring to the polynomial case, see \cite[Theorem 8]{GMPSS14}, by considering that $ M_{n,p}^{\spQ_\mu} $ and $ K_{n,p}^{\spQ_\mu} $ are still symmetric matrices, and having (\ref{eq:poly51_nn})-(\ref{eq:poly52}).
\end{proof}

\begin{remark}
For every $ p \geq 2, n \geq 2 $ and $ j = 1, \ldots, n+p-2 $, let $ \lambda_j(K_{n,p}^{\spQ_\mu}) $ be the $j$-th smallest eigenvalue of $ K_{n,p}^{\spQ_\mu} $, that is, $ \lambda_1(K_{n,p}^{\spQ_\mu}) \leq \ldots \lambda_{n+p-2}(K_{n,p}^{\spQ_\mu}) $. Then, we conjecture that for every $ p \geq 2 $ and for each fixed $ j \geq 1 $,

\begin{equation}
\lim_{n \rightarrow \infty} (n^2 \lambda_j(K_{n,p}^{\spQ_\mu})) = j^2 \pi^2.
\end{equation}

\end{remark}

There is a motivation related to the connection between $ K_{n,p}^{\spQ_\mu} $ and the problem \cite[Eq. (1)]{galerkin_gbsp}, furthermore in the polynomial case it has been verified numerically for some values of $p$ and $j$; see \cite[Remark 3]{GMPSS14} for details.

\begin{theorem}
For all $ p \geq 2 $ and all $ n \geq 2 $, let $ \lambda_{\min}(A_{n,p}^{\spQ_\mu}) $ be an eigenvalue of $A_{n,p}^{\spQ_\mu}$ with minimum modulus. Then,

\begin{equation} \label{eq:poly56}
| \lambda_{\min}(A_{n,p}^{\spQ_\mu}) | \geq \lambda_{\min}(\re A_{n,p}^{\spQ_\mu}) \geq \frac{C_p^{\spQ_\nu}(\pi^2+\gamma)}{n},
\end{equation}
with $ C_p^{\spQ_\nu} \geq 0 $ being the same constant appearing in Theorem \ref{th:poly08}.

\end{theorem}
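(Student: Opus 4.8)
The plan is to follow the structure of \cite[Theorem 9]{GMPSS14} verbatim, using only that $A_{n,p}^{\spQ_\mu}$ decomposes as a (positive multiple of) stiffness plus mass plus a skew-symmetric part, together with the eigenvalue estimates just proved in Theorem~\ref{th:poly08}. Concretely, write $A_{n,p}^{\spQ_\mu} = K_{n,p}^{\spQ_\mu} + \gamma\, M_{n,p}^{\spQ_\mu} + H_{n,p}^{\spQ_\mu}$ where $H_{n,p}^{\spQ_\mu}$ is the advection matrix (this is the decomposition coming from the bilinear form associated with \cite[Eq.~(1)]{galerkin_gbsp}); the key structural fact is that $K_{n,p}^{\spQ_\mu}$ and $M_{n,p}^{\spQ_\mu}$ are symmetric positive definite while $H_{n,p}^{\spQ_\mu}$ is skew-symmetric, so $\re A_{n,p}^{\spQ_\mu} = \tfrac12(A_{n,p}^{\spQ_\mu} + (A_{n,p}^{\spQ_\mu})^T) = K_{n,p}^{\spQ_\mu} + \gamma\, M_{n,p}^{\spQ_\mu}$.

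First I would establish the first inequality in \eqref{eq:poly56}, namely $|\lambda_{\min}(A_{n,p}^{\spQ_\mu})| \geq \lambda_{\min}(\re A_{n,p}^{\spQ_\mu})$. This is the standard fact that for any complex square matrix $B$ and any eigenvalue $\lambda$ of $B$ with unit eigenvector $v$, one has $\re\lambda = v^* (\re B) v \geq \lambda_{\min}(\re B)$, hence $|\lambda| \geq \re\lambda \geq \lambda_{\min}(\re B)$; here one also needs that $\re A_{n,p}^{\spQ_\mu}$ is positive definite so that $\lambda_{\min}(\re A_{n,p}^{\spQ_\mu}) \geq 0$ and the chain of inequalities is meaningful in absolute value. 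Second, I would bound $\lambda_{\min}(\re A_{n,p}^{\spQ_\mu})$ from below: by Weyl's inequality for sums of Hermitian matrices,
\begin{equation*}
\lambda_{\min}(\re A_{n,p}^{\spQ_\mu}) = \lambda_{\min}(K_{n,p}^{\spQ_\mu} + \gamma\, M_{n,p}^{\spQ_\mu}) \geq \lambda_{\min}(K_{n,p}^{\spQ_\mu}) + \gamma\,\lambda_{\min}(M_{n,p}^{\spQ_\mu}).
\end{equation*}
Then I would invoke Theorem~\ref{th:poly08}: part~2 gives $\lambda_{\min}(K_{n,p}^{\spQ_\mu}) \geq \pi^2 C_p^{\spQ_\nu}/n^2$ and part~1 gives $\lambda_{\min}(M_{n,p}^{\spQ_\mu}) \geq C_p^{\spQ_\nu}$, whence $\lambda_{\min}(\re A_{n,p}^{\spQ_\mu}) \geq \pi^2 C_p^{\spQ_\nu}/n^2 + \gamma C_p^{\spQ_\nu} \geq C_p^{\spQ_\nu}(\pi^2 + \gamma)/n$, where the last step uses $n^2 \geq n$ for $n \geq 2$ on the stiffness term and simply drops the excess on the mass term (assuming $\gamma \geq 0$, as in the polynomial reference). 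This yields exactly \eqref{eq:poly56}.

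The only genuine subtlety — and the step I expect to require the most care — is making precise the decomposition $A_{n,p}^{\spQ_\mu} = K_{n,p}^{\spQ_\mu} + \gamma M_{n,p}^{\spQ_\mu} + (\text{skew part})$ with the correct sign conventions and the hypotheses on $\gamma$; this is entirely inherited from the set-up in \cite{galerkin_gbsp} and parallels \cite[Theorem 9]{GMPSS14}, but it must be stated cleanly since everything downstream (positive definiteness of $\re A_{n,p}^{\spQ_\mu}$, the direction of Weyl's inequality, the final crude bound $n \leq n^2$) depends on it. Once that is in place, the rest is routine linear algebra plus a direct citation of Theorem~\ref{th:poly08}, exactly as in the polynomial case, and the proof can be written in a few lines by remarking on the analogy — the generalized B-spline setting changes none of the structural ingredients, only the values of the constants $C_p^{\spQ_\nu}$.
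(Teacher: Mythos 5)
Your overall strategy (compute $\re A_{n,p}^{\spQ_\mu}$ by killing the skew-symmetric advection part, bound $\lambda_{\min}$ of the sum of Hermitian matrices from below by the sum of the $\lambda_{\min}$'s, plug in Theorem~\ref{th:poly08}, and get the first inequality from $\re\lambda = v^*(\re B)v \geq \lambda_{\min}(\re B)$) is exactly the paper's strategy. However, there is a genuine gap in the execution: you use the decomposition $A_{n,p}^{\spQ_\mu} = K_{n,p}^{\spQ_\mu} + \gamma M_{n,p}^{\spQ_\mu} + H_{n,p}^{\spQ_\mu}$, whereas the correct one (inherited from the construction in \cite{galerkin_gbsp} and used in the paper's proof) is
\begin{equation*}
A_{n,p}^{\spQ_\mu} = n\, K_{n,p}^{\spQ_\mu} + \beta\, H_{n,p}^{\spQ_\mu} + \frac{\gamma}{n}\, M_{n,p}^{\spQ_\mu},
\qquad\text{so}\qquad
\re A_{n,p}^{\spQ_\mu} = n\, K_{n,p}^{\spQ_\mu} + \frac{\gamma}{n}\, M_{n,p}^{\spQ_\mu}.
\end{equation*}
The factor $n$ on the stiffness block is not cosmetic: it is what converts the estimate $\lambda_{\min}(K_{n,p}^{\spQ_\mu}) \geq \pi^2 C_p^{\spQ_\nu}/n^2$ into $\lambda_{\min}(n K_{n,p}^{\spQ_\mu}) \geq \pi^2 C_p^{\spQ_\nu}/n$, which together with $\lambda_{\min}\bigl(\tfrac{\gamma}{n} M_{n,p}^{\spQ_\mu}\bigr) \geq \tfrac{\gamma}{n} C_p^{\spQ_\nu}$ gives exactly $C_p^{\spQ_\nu}(\pi^2+\gamma)/n$.

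With your unscaled decomposition you arrive at $\lambda_{\min}(\re A_{n,p}^{\spQ_\mu}) \geq \pi^2 C_p^{\spQ_\nu}/n^2 + \gamma C_p^{\spQ_\nu}$ and then claim this dominates $C_p^{\spQ_\nu}(\pi^2+\gamma)/n$ ``using $n^2 \geq n$ on the stiffness term.'' That inequality goes the wrong way: $n^2 \geq n$ gives $\pi^2 C_p^{\spQ_\nu}/n^2 \leq \pi^2 C_p^{\spQ_\nu}/n$, not $\geq$. Indeed, for $\gamma = 0$ (singled out in the paper as the interesting value) and $n \geq 2$ your lower bound is $\pi^2 C_p^{\spQ_\nu}/n^2$, which is strictly smaller than the claimed $\pi^2 C_p^{\spQ_\nu}/n$, so the final step fails and cannot be repaired without restoring the scaling factors in the decomposition. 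You correctly flagged the decomposition as the delicate point, but the version you committed to breaks the proof; everything else (Weyl/minimax, the Bendixson-type localization $|\lambda_{\min}(A)| \geq \lambda_{\min}(\re A)$) matches the paper.
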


\begin{proof}
Let us preliminary recall that, having $X^*$ the conjugate transposed of $X$, they hold:
$$ \re X := \frac{X+X^*}{2}, \qquad \im X := \frac{X-X^*}{2\i}. $$
By the expression 
%
$$ A_{n,p}^{\spQ_\mu} = n K_{n,p}^{\spQ_\mu} + \beta H_{n,p}^{\spQ_\mu}
                        + \frac{\gamma}{n} M_{n,p}^{\spQ_\mu}, $$
and recalling that $K_{n,p}^{\spQ_\mu}, M_{n,p}^{\spQ_\mu}$ are symmetric, while $H_{n,p}^{\spQ_\mu}$ is skew-symmetric, we infer that the real part of $A_{n,p}^{\spQ_\mu}$ is given by

$$ \re A_{n,p}^{\spQ_\mu} = n K_{n,p}^{\spQ_\mu} + \frac{\gamma}{n} M_{n,p}^{\spQ_\mu}. $$
Therefore, by the minimax principle and by Theorem \ref{th:poly08} we obtain

$$ \lambda_{\min}(\re A_{n,p}^{\spQ_\mu}) 
   \geq \lambda_{\min}(n K_{n,p}^{\spQ_\mu}) 
        + \lambda_{\min} \left( \frac{\gamma}{n} M_{n,p}^{\spQ_\mu} \right)
   \geq n \frac{\pi^2 C_p^{\spQ_\nu}}{n^2} + \frac{\gamma}{n} C_p^{\spQ_\nu} 
   = \frac{C_p^{\spQ_\nu}(\pi^2+\gamma)}{n}. $$
The result is obtained by considering that $ | \lambda_{\min}(A_{n,p}^{\spQ_\mu}) | \geq \lambda_{\min}(\re A_{n,p}^{\spQ_\mu}) $ because of the spectrum localization

\begin{equation}
\sigma(X) \subseteq [ \lambda_{\min}(\re X), \lambda_{\max}(\re X) ] 
          \times    [ \lambda_{\min}(\im X), \lambda_{\max}(\im X) ] \subset \mathbb{C}, \qquad
          \forall   X \in \mathbb{C}^{m \times m}.
\end{equation}

\end{proof}

The lower bound (\ref{eq:poly56}) remains bounded away from 0 for all $ \gamma \geq 0 $ and, in particular, for the interesting value $ \gamma = 0 $.

\section{Conditioning} \label{sec:conditioning} 

In this subsection we provide a bound for the condition number 

$$    \kappa_2(A_{n,p}^{\spQ_\mu}) 
   := \Vert A_{n,p}^{\spQ_\mu} \Vert \mbox{ } \Vert (A_{n,p}^{\spQ_\mu})^{-1} \Vert. $$
Let us start with the Fan-Hoffman theorem (\cite{bhatia97}).

\begin{theorem}

Let $ X \in \mathbb{C}^{m \times m} $ and let:
$$ \Vert X \Vert = s_1(X) \geq s_2(X) \geq \cdots \geq s_m(X), \quad
   \lambda_1(\re X) \geq \lambda_2(\re X) \geq \cdots \lambda_m(\re X) $$
be the singular values of $X$ and the eigenvalues of $ \re X $, respectively. Then

$$ s_j(X) \geq \lambda_j(\re X), \quad \forall j = 1, \ldots, m. $$

\begin{theorem}

For every $ p \geq 2 $ there exists a constant $ \alpha_p > 0 $ such that

\begin{equation}
\kappa_2(A_{n,p}^{\spQ_\mu}) \leq \alpha_p n^2, \quad \forall n \geq 2.
\end{equation}

\end{theorem}

\begin{proof}

Fix $ p \geq 2 $ and $ n \geq 2 $. Being either symmetric or skew-symmetric, $ K_{n,p}^{\spQ_\mu}, H_{n,p}^{\spQ_\mu} $ and $ M_{n,p}^{\spQ_\mu} $ are normal matrices, and by applying \cite[Lemma 7]{galerkin_gbsp} we obtain for $ \Vert A_{n,p}^{\spQ_\mu} \Vert $ the following bound:

\begin{eqnarray}
         \Vert A_{n,p}^{\spQ_\mu} \Vert 
& =    & \left\Vert n K_{n,p}^{\spQ_\mu} + \beta H_{n,p}^{\spQ_\mu}
         + \frac{\gamma}{n} M_{n,p}^{\spQ_\mu} \right\Vert
  \leq   \Vert n K_{n,p}^{\spQ_\mu} \Vert + |\beta| \Vert H_{n,p}^{\spQ_\mu} \Vert 
         + \gamma \left\Vert \frac{1}{n} M_{n,p}^{\spQ_\mu} \right\Vert \nonumber \\
& \leq & \Vert n K_{n,p}^{\spQ_\mu} \Vert_{\infty} + |\beta| \Vert H_{n,p}^{\spQ_\mu} \Vert_{\infty}
         + \gamma \left\Vert \frac{1}{n} M_{n,p}^{\spQ_\mu} \right\Vert_{\infty}
  \leq   C_{p,\alpha} n + 2|\beta| + \frac{\gamma(p+1)}{n}.
\end{eqnarray}

\noindent On the other hand, being $ s_{n+p-2}(A_{n,p}^{\spQ_\mu}) $ the minimum singular value of $A_{n,p}^{\spQ_\mu}$, we have

\begin{equation}
   \Vert (A_{n,p}^{\spQ_\mu})^{-1} \Vert 
 = \frac{1}{s_{n+p-2}(A_{n,p}^{\spQ_\mu})} \leq \frac{1}{\lambda_{\min}(\re A_{n,p}^{\spQ_\mu})}
   \leq \frac{n}{C_p^{\spQ_\nu}(\pi^2+\gamma)}.
\end{equation}

\noindent so $$ \displaystyle \kappa_2(A_{n,p}^{\spQ_\mu}) 
\leq \frac{C_{p,\alpha}n^2 + 2n|\beta| + \gamma(p+1)}{C_p^{\spQ_\nu}(\pi^2+\gamma)}
\leq \frac{C_{p,\alpha}n^2 + n^2 |\beta| + \gamma(p+1)(n^2/4)}{C_p^{\spQ_\nu}(\pi^2+\gamma)}. $$
that makes the theorem satisfied for $ \alpha_p := \frac{1}{C_p^{\spQ_\nu}(\pi^2+\gamma)} [C_{p,\alpha} + |\beta| + \frac{\gamma(p+1)}{4}]. $

\end{proof}

\end{theorem}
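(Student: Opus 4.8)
The plan is to bound the two factors of $\kappa_2(A_{n,p}^{\spQ_\mu}) = \Vert A_{n,p}^{\spQ_\mu}\Vert \, \Vert (A_{n,p}^{\spQ_\mu})^{-1}\Vert$ separately: an $O(n)$ upper bound on the norm, and an $O(n)$ upper bound on the norm of the inverse, whose product is the desired $O(n^2)$.

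For the numerator, I would start from the decomposition $A_{n,p}^{\spQ_\mu} = n K_{n,p}^{\spQ_\mu} + \beta H_{n,p}^{\spQ_\mu} + \frac{\gamma}{n} M_{n,p}^{\spQ_\mu}$ and apply the triangle inequality for the spectral norm. Since $K_{n,p}^{\spQ_\mu}$, $M_{n,p}^{\spQ_\mu}$ are symmetric and $H_{n,p}^{\spQ_\mu}$ is skew-symmetric, all three are normal, so their spectral norms are controlled by their $\infty$-norms (equivalently, by applying \cite[Lemma 7]{galerkin_gbsp}, which bounds the spectral norm of such banded structured matrices by a constant times the maximal absolute row sum). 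The GB-spline mass and stiffness matrices have bandwidth governed by $p$ and entries that are $O(1)$ and $O(1)$ respectively (the stiffness matrix picking up no extra $n$-factor once the $n$ is pulled out front), giving $\Vert n K_{n,p}^{\spQ_\mu}\Vert \le C_{p,\alpha}\, n$, $\Vert H_{n,p}^{\spQ_\mu}\Vert \le 2|\beta|$-type constant, and $\Vert \frac{1}{n}M_{n,p}^{\spQ_\mu}\Vert \le \frac{\gamma(p+1)}{n}$. Summing yields $\Vert A_{n,p}^{\spQ_\mu}\Vert \le C_{p,\alpha}\, n + 2|\beta| + \frac{\gamma(p+1)}{n}$.

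For the denominator, the key is that $A_{n,p}^{\spQ_\mu}$, though non-symmetric, has a real part that is symmetric positive definite: by the Fan--Hoffman theorem just stated, the smallest singular value satisfies $s_{n+p-2}(A_{n,p}^{\spQ_\mu}) \ge \lambda_{\min}(\re A_{n,p}^{\spQ_\mu})$, hence $\Vert (A_{n,p}^{\spQ_\mu})^{-1}\Vert = 1/s_{n+p-2}(A_{n,p}^{\spQ_\mu}) \le 1/\lambda_{\min}(\re A_{n,p}^{\spQ_\mu})$. Then the lower bound \eqref{eq:poly56}, namely $\lambda_{\min}(\re A_{n,p}^{\spQ_\mu}) \ge \frac{C_p^{\spQ_\nu}(\pi^2+\gamma)}{n}$, gives $\Vert (A_{n,p}^{\spQ_\mu})^{-1}\Vert \le \frac{n}{C_p^{\spQ_\nu}(\pi^2+\gamma)}$.

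Multiplying the two estimates and crudely bounding the lower-order terms (using $2n \le n^2/... $-type inequalities valid for $n \ge 2$, e.g. $2n|\beta| \le n^2|\beta|$ and $\gamma(p+1) \le \gamma(p+1) n^2/4$) collects everything into a single constant times $n^2$, so one may take $\alpha_p := \frac{1}{C_p^{\spQ_\nu}(\pi^2+\gamma)}\bigl[C_{p,\alpha} + |\beta| + \frac{\gamma(p+1)}{4}\bigr]$. The main obstacle—really the only non-routine point—is justifying the uniform-in-$n$ bound on the entries of the GB-spline stiffness matrix, i.e.\ that scaling out the factor $n$ leaves $O(1)$ entries; this rests on \cite[Lemma 7]{galerkin_gbsp} together with the fact that derivatives of the (rescaled) GB-spline basis functions behave like those in the polynomial case, which is precisely the content already invoked in deriving \eqref{eq:poly51_nn}--\eqref{eq:poly51_n}. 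Everything else is the triangle inequality, Fan--Hoffman, and the previously established eigenvalue bound.
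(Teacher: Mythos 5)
Your proposal follows essentially the same route as the paper's proof: the triangle inequality combined with \cite[Lemma 7]{galerkin_gbsp} and normality to get $\Vert A_{n,p}^{\spQ_\mu} \Vert \leq C_{p,\alpha} n + 2|\beta| + \frac{\gamma(p+1)}{n}$, the Fan--Hoffman theorem together with the bound (\ref{eq:poly56}) to get $\Vert (A_{n,p}^{\spQ_\mu})^{-1} \Vert \leq \frac{n}{C_p^{\spQ_\nu}(\pi^2+\gamma)}$, and the same crude absorption of lower-order terms for $n \geq 2$ leading to the identical constant $\alpha_p$. The argument is correct and matches the paper's proof step for step.
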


\section{More preliminaries on spectral analysis}

%
%
%

The following one is another important result regarding sequences of Toeplitz matrices.

\begin{theorem} \label{th:poly03-arrows}

Let $ f \in L_1([-\pi,\pi]) $ be a real-valued function, and let $ m_f := \ess \inf f, M_f := \ess \sup f $, and suppose $ m_f < M_f $. Then

\begin{center}
$ \lambda_{\min}(T_m(f)) \searrow m_f $ and $ \lambda_{\max}(T_m(f)) \nearrow M_f $ as $ m \rightarrow \infty $.
\end{center}

\end{theorem}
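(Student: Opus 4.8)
The plan is to prove the two monotone-convergence statements for the extreme eigenvalues of the Toeplitz matrices $T_m(f)$ by combining a monotonicity argument (principal submatrices interlace) with a density/quantitative argument (the eigenvalues can be pushed arbitrarily close to the essential extrema of $f$). I will treat $\lambda_{\min}$; the statement for $\lambda_{\max}$ follows by applying the $\lambda_{\min}$ result to $-f$, since $T_m(-f) = -T_m(f)$, $m_{-f} = -M_f$, and $\lambda_{\min}(T_m(-f)) = -\lambda_{\max}(T_m(f))$.

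First I would establish monotonicity. Since $f$ is real-valued, $T_m(f)$ is Hermitian, and $T_m(f)$ is the $m \times m$ leading principal submatrix of $T_{m+1}(f)$. By the Cauchy interlacing theorem for Hermitian matrices, $\lambda_{\min}(T_{m+1}(f)) \leq \lambda_{\min}(T_m(f))$, so the sequence $\{\lambda_{\min}(T_m(f))\}_m$ is non-increasing. Next I would establish the lower bound $\lambda_{\min}(T_m(f)) \geq m_f$ for every $m$: for any unit vector $v \in \CC^m$, the Rayleigh quotient $v^* T_m(f) v$ equals $\frac{1}{2\pi}\int_{-\pi}^{\pi} f(\theta)\, |p_v(\theta)|^2\, \d\theta$ where $p_v(\theta) = \sum_{k} v_k \e^{\i k\theta}$ is the associated trigonometric polynomial with $\frac{1}{2\pi}\int |p_v|^2 = \|v\|^2 = 1$; since $f \geq m_f$ almost everywhere, this Rayleigh quotient is $\geq m_f$, hence $\lambda_{\min}(T_m(f)) \geq m_f$. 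Combining these two facts, the non-increasing sequence $\{\lambda_{\min}(T_m(f))\}_m$ is bounded below by $m_f$, so it converges to some limit $L \geq m_f$.

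It remains to show $L \leq m_f$, equivalently that for every $\varepsilon > 0$ there exists $m$ with $\lambda_{\min}(T_m(f)) < m_f + \varepsilon$. This is the heart of the argument. By definition of $m_f = \ess\inf f$, the set $E_\varepsilon := \{\theta \in [-\pi,\pi] : f(\theta) < m_f + \varepsilon\}$ has positive Lebesgue measure. I would use this to construct, for $m$ large, a unit trigonometric polynomial $p$ of degree $< m$ whose squared modulus $|p|^2$ concentrates its mass (in the sense of the normalized measure $\frac{1}{2\pi}|p(\theta)|^2\d\theta$) overwhelmingly on $E_\varepsilon$; then the Rayleigh quotient $\frac{1}{2\pi}\int f |p|^2$ is close to an average of $f$ over $E_\varepsilon$, which is $< m_f + \varepsilon$, forcing $\lambda_{\min}(T_m(f)) < m_f + \varepsilon$ (after absorbing the small contribution of the complement, using $f \in L_1$). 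The standard way to realize such a concentrating polynomial: pick a density point $\theta_0$ of $E_\varepsilon$ and take Fejér-kernel–type polynomials centered at $\theta_0$, i.e. $p(\theta)$ proportional to a shifted Fejér kernel $K_N(\theta - \theta_0)$, whose mass concentrates in an arbitrarily small neighborhood of $\theta_0$ as $N \to \infty$; since $\theta_0$ is a density point, a small enough neighborhood lies mostly inside $E_\varepsilon$. One then checks that the tail contribution $\frac{1}{2\pi}\int_{E_\varepsilon^c} f\,|p|^2$ is $o(1)$ using the uniform (off a neighborhood of $\theta_0$) decay of the Fejér kernel together with $f \in L_1$; this is the one technical point requiring care, but it is routine.

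The main obstacle is precisely this last density argument — making rigorous the claim that the Rayleigh quotient can be driven below $m_f + \varepsilon$ when $f$ is merely $L_1$ (not continuous), so that one cannot simply evaluate $f$ at a point. The Fejér-kernel construction at a Lebesgue density point handles this, but controlling the $L_1$-tail against the kernel's peak requires the standard estimate that $K_N$ is bounded by $C/(N\delta^2)$ outside a $\delta$-neighborhood of its center. Everything else (Hermitian-ness, interlacing, the Rayleigh lower bound, and the reduction from $\lambda_{\max}$ to $\lambda_{\min}$) is immediate.
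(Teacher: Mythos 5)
The paper offers no proof of this statement: it is recalled as a classical result on Hermitian Toeplitz sequences (the Szeg\H{o}-type localization and monotone convergence of the extreme eigenvalues), so there is no internal argument to compare yours against. Your outline is the standard textbook proof and is essentially correct: Hermitian-ness of $T_m(f)$ for real $f$, the fact that $T_m(f)$ is the leading principal submatrix of $T_{m+1}(f)$ combined with Cauchy interlacing, the Rayleigh-quotient identity $v^*T_m(f)v=\frac{1}{2\pi}\int_{-\pi}^{\pi}f\,|p_v|^2$ yielding $\lambda_{\min}(T_m(f))\geq m_f$, and the reduction of the $\lambda_{\max}$ claim to the $\lambda_{\min}$ claim via $-f$ are all sound.

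The one step to sharpen is the concentration argument. Choosing $\theta_0$ to be merely a density point of $E_\varepsilon=\{f<m_f+\varepsilon\}$ is not quite enough: the Fej\'er kernel has height of order $N$ near its center, so the contribution of $E_\varepsilon^c\cap(\theta_0-\delta,\theta_0+\delta)$ to $\frac{1}{2\pi}\int f\,|p|^2$ is controlled only if the local averages $\frac{1}{2\delta}\int_{\theta_0-\delta}^{\theta_0+\delta}|f-f(\theta_0)|$ tend to $0$, which is a property of $f$ at $\theta_0$, not of the set $E_\varepsilon$. The clean fix is to pick $\theta_0\in E_\varepsilon$ that is a Lebesgue point of $f$ (almost every point is one, and $|E_\varepsilon|>0$, so such a point exists, with $f(\theta_0)<m_f+\varepsilon$). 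Then the Rayleigh quotient of the shifted, normalized Fej\'er vector $v_k=N^{-1/2}\e^{-\i k\theta_0}$ is exactly the Fej\'er mean of $f$ at $\theta_0$, which converges to $f(\theta_0)$ by the Fej\'er--Lebesgue theorem; hence $\lambda_{\min}(T_N(f))<m_f+\varepsilon$ for $N$ large, and together with monotonicity and the lower bound this gives $\lambda_{\min}(T_m(f))\searrow m_f$. With that adjustment your proof is complete.
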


Another result due to Parter \cite{parter61} concerns the asymptotics of the $j$-th smallest eigenvalue $ \lambda_j(T_m(f)) $, for $j$ fixed and $ m \rightarrow \infty $.

\begin{theorem}
Let $ f : \mathbb{R} \rightarrow \mathbb{R} $ be continuous and $2\pi$-periodic. Let $ m_f := \min_{\theta \in \mathbb{R}} f(\theta) = f(\theta_{\min}) $ and let $ \theta_{\min} $ be the unique point in $ (-\pi,\pi] $ such that $ f(\theta_{\min}) = m_f $. Assume there exists $ s \geq 1 $ such that $f$ has $2s$ continuous derivatives in $ (\theta_{\min} - \epsilon, \theta_{\min} + \epsilon) $ for some $ \epsilon > 0 $ and $ f^{(2s)}(\theta_{\min}) > 0 $ is the first non-vanishing derivative of $f$ at $\theta_{\min}$. Finally, for every $ m \geq 1 $, let $ \lambda_1(T_m(f)) \leq \cdots \leq \lambda_m(T_m(f)) $ be the eigenvalues of $T_m(f)$ arranged in non-decreasing order. Then, for each fixed $ j \geq 1 $,

$$ \lambda_j(T_m(f)) - m_f \sim_{m \rightarrow \infty} 
   c_{s,j} \frac{f^{(2s)}(\theta_{\min})}{(2s)!} \frac{1}{m^{2s}}, $$
where $ c_{s,j} > 0 $ is a constant depending only on $s$ and $j$.
\end{theorem}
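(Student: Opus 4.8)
The plan is to transfer the discrete eigenvalue problem to a continuous differential eigenvalue problem by rescaling near the unique minimiser of $f$, using the Courant--Fischer characterisation throughout. First I would put the matrix in a normal form. Subtracting the constant $m_f$ replaces $T_m(f)$ by $T_m(f)-m_f I$ and hence shifts every eigenvalue by $-m_f$, so it suffices to treat $m_f=0$; and replacing $f$ by $f_0(\theta):=f(\theta+\theta_{\min})$ gives the unitary diagonal similarity $T_m(f_0)=D\,T_m(f)\,D^*$ with $D=\mathrm{diag}(\e^{\i h\theta_{\min}})_{h=1}^{m}$, so the spectrum is unchanged and we may assume $\theta_{\min}=0$. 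Since $f^{(2s)}(0)>0$ is the first non-vanishing derivative, Taylor's theorem gives $f(\theta)=c\,\theta^{2s}+o(\theta^{2s})$ as $\theta\to0$ with $c:=f^{(2s)}(0)/(2s)!>0$, while, $0$ being the \emph{unique} minimiser, $f\ge\delta(\eta)>0$ on $\{\eta\le|\theta|\le\pi\}$ for every $\eta>0$. (That $\lambda_j(T_m(f))-m_f\to0$ for each fixed $j$ is a routine consequence of an argument in the spirit of Theorem~\ref{th:poly03-arrows}; the content here is the \emph{rate}.)

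Next I would rewrite the Rayleigh quotient in Fourier form. For $x\in\CC^{m}$ set $p_x(\theta)=\sum_{k=1}^{m}x_k\e^{\i k\theta}$; then $x^*x=\tfrac{1}{2\pi}\int_{-\pi}^{\pi}|p_x(\theta)|^2\,\d\theta$ and, $f$ being real, $x^*T_m(f)x=\tfrac{1}{2\pi}\int_{-\pi}^{\pi}f(\theta)\,|p_x(\theta)|^2\,\d\theta$, so
$$\lambda_j(T_m(f))=\min_{\substack{W\subseteq\mathcal P_m\\ \dim W=j}}\ \max_{0\ne p\in W}\ \frac{\int_{-\pi}^{\pi}f(\theta)\,|p(\theta)|^2\,\d\theta}{\int_{-\pi}^{\pi}|p(\theta)|^2\,\d\theta},\qquad \mathcal P_m:=\mathrm{span}\{\e^{\i k\theta}:1\le k\le m\}.$$
The substitution $\theta=\phi/m$ exhibits the scale $m^{-2s}$: on $|\theta|$ of order $1/m$ one has $f(\theta)\approx c\,\theta^{2s}=c\,\phi^{2s}/m^{2s}$, whereas, if $x_k\approx g(k/m)/m$, the rescaled polynomial $p_x(\phi/m)$ tends to $\check g(\phi):=\int_0^1 g(t)\e^{\i t\phi}\,\d t$, a function whose inverse Fourier transform $g$ is supported in $[0,1]$; by Plancherel $\int_{\RR}\phi^{2s}|\check g(\phi)|^2\,\d\phi$ equals, up to a fixed constant, $\int_0^1|g^{(s)}(t)|^2\,\d t$. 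Thus the limit object is the Rayleigh quotient of $(-1)^s\,\d^{2s}/\d t^{2s}$ on $H_0^s(0,1)$ (boundary conditions $g^{(i)}(0)=g^{(i)}(1)=0$, $0\le i\le s-1$), and I would \emph{define} $c_{s,j}$ to be its $j$-th eigenvalue: positive, depending only on $s$ and $j$, and equal to $j^2\pi^2$ when $s=1$ (with $1/\pi$ the Poincar\'e constant of (\ref{eq:poly52}) handling $j=1$), consistently with the Remark above.

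The proof then reduces to a matching pair of estimates. For $\limsup_{m}m^{2s}\lambda_j(T_m(f))\le c\,c_{s,j}$: take the first $j$ eigenfunctions of the limit operator, approximate them in $H_0^s(0,1)$ by smooth functions $g_1,\dots,g_j$ with compact support in $(0,1)$, and use the sampled polynomials $p_\ell^{(m)}(\theta)=\tfrac1m\sum_{k=1}^m g_\ell(k/m)\e^{\i k\theta}$ as a $j$-dimensional test space; a Riemann-sum computation, together with the rapid decay of $\check g_\ell$ (which makes the part of $f$ over $\{|\theta|\ge\eta\}$ contribute $o(m^{-2s})$), bounds the maximal Rayleigh quotient on this space by $(c+o(1))\,c_{s,j}\,m^{-2s}$, and one lets the $H_0^s$-approximation error tend to $0$. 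For $\liminf_{m}m^{2s}\lambda_j(T_m(f))\ge c\,c_{s,j}$: given optimal $j$-dimensional subspaces $W_m\subseteq\mathcal P_m$, rescale an orthonormal basis to unit functions $\tilde q_\ell^{(m)}(\phi)$ whose Fourier spectrum lies in the fixed band $[0,1]$; the bound on the Rayleigh quotient forces $\int\phi^{2s}|\tilde q_\ell^{(m)}|^2$ to stay bounded (so no mass escapes to $|\phi|\to\infty$, and, since $f\ge\delta$ away from $0$, none sits where $|\theta|\ge\eta$ either), Bernstein's inequality for band-limited functions gives a uniform $H^1$ bound, and Rellich together with this tightness yields strong $L^2(\RR)$-convergence along a subsequence to an orthonormal $j$-frame spanning a $j$-dimensional subspace $V\subseteq H_0^s(0,1)$; lower semicontinuity of $u\mapsto\int_{\RR}\phi^{2s}|u|^2$ and the min--max definition of $c_{s,j}$ then give $\liminf_m m^{2s}\max_{p\in W_m}(\text{Rayleigh quotient})\ge(c-\varepsilon)\,c_{s,j}$ for every $\varepsilon>0$. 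Combining the two estimates gives $\lambda_j(T_m(f))-m_f\sim c\,c_{s,j}\,m^{-2s}$, which is the assertion.

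The main obstacle is precisely this compactness in the lower bound: one must ensure that the limiting family stays genuinely $j$-dimensional (linear independence of the rescaled, normalised bases must survive the weak limit) and that no quadratic-form mass is lost, neither to $|\phi|\to\infty$ nor into the boundary layers near $\theta=\pm\pi$. Both are handled by combining ``spectrum in a fixed band $\Rightarrow$ uniform $H^1$ bound $\Rightarrow$ local compactness'' with ``the weight $\phi^{2s}$ supplies tightness'', so the orthonormal $j$-frames pass to the limit without collapsing. Once that is in place the two estimates close; by contrast the normalisations of the first step and the test-function construction of the upper bound are routine.
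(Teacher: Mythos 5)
The paper does not prove this theorem at all: it is recalled verbatim from Parter \cite{parter61}, and the only in-paper commentary is the subsequent Remark identifying $c_{s,j}$ as the $j$-th eigenvalue of the limiting boundary value problem. So there is no proof here to compare against; what you have written is a sketch of the classical argument (essentially Parter's own route). As a sketch it is sound and consistent with that Remark: the normalizations ($m_f=0$ by shifting, $\theta_{\min}=0$ via the diagonal unitary similarity $D\,T_m(f)\,D^*$), the Fourier form of the Rayleigh quotient, the rescaling $\theta=\phi/m$ that turns $c\,\theta^{2s}$ into $c\,\phi^{2s}/m^{2s}$, and the identification of $c_{s,j}$ with the $j$-th eigenvalue of $(-1)^s u^{(2s)}=\lambda u$ with $u^{(i)}(0)=u^{(i)}(1)=0$ for $0\le i\le s-1$ are all correct, and the mechanism by which those boundary conditions emerge (finiteness of $\int\phi^{2s}|\check g(\phi)|^2\,\d\phi$ for $g$ supported in $[0,1]$ forces $g\in H_0^s(0,1)$) is the right one; the special case $c_{1,j}=j^2\pi^2$ matches the paper. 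The one place where your outline stops short of a proof is the one you yourself flag: the lower bound's compactness step (band-limitation $\Rightarrow$ Bernstein $\Rightarrow$ local compactness, the weight $\phi^{2s}$ supplying tightness, and survival of linear independence of the $j$-frame in the limit) is a plan rather than an argument, and carrying it out rigorously is the bulk of the work in Parter's paper. If the goal is to match the paper, a citation suffices; if the goal is a self-contained proof, that step must be written out in full.
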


\begin{remark}
The constant $ c_{s,j} $ is the $j$-th smallest eigenvalue of the boundary value problem

$$ \left\{ \begin{aligned}
& (-1)^s u^{(2s)}(x) = \hbox{f}(x), \quad \mbox{for } 0 < x < 1, \\
& u(0) = u'(0) = \cdots = u^{(s-1)}(0) = 0, u(1) = u'(1) = \cdots = u^{(s-1)}(1) = 0,
\end{aligned} \right. $$
see \cite[p. 191]{parter61}. Thus, we find that $ c_{1,j} = j^2 \pi^2 $ for all $ j \geq 1 $.

\end{remark}

\noindent It is also important to recall some properties of Toeplitz matrices having a two-level structure.
In the next lemma, see e.g. \cite{bhatia97}, we collect some basic results concerning tensor-products matrices.
%
%
\begin{lemma} \label{lem:poly01}
Suppose that $ X \in \mathbb{C}^{m_1 \times m_1} $ and $ Y \in \mathbb{C}^{m_2 \times m_2} $ are normal matrices with eigenvalues given by $ \lambda_1(X), \ldots, \lambda_{m_1}(X) $ and $ \lambda_1(Y), \ldots, \lambda_{m_2}(Y) $. Then
\begin{enumerate}
\item $ X \otimes Y $ is normal and $ (X \otimes Y)^* = X^* \otimes Y^* $;
\item $ \sigma(X \otimes Y) = \{ \lambda_i(X) \lambda_j(Y) : i = 1, \ldots, m_1, j = 1, \ldots, m_2 \} $;
\item $ \rank(X \otimes Y) = \rank(X) \rank(Y); $
\item $ \Vert X \otimes Y \Vert = \Vert X \Vert \mbox{ } \Vert Y \Vert $
      and $ \Vert X \otimes Y \Vert_1 = \Vert X \Vert_1 \mbox{ } \Vert Y \Vert_1 $.
\end{enumerate}
Note that statements 1 and 2 imply that if $ X,Y $ are Hermitian then $ X \otimes Y $ is Hermitian, and if $ X,Y $ are Hermitian and positive definite then $ X \otimes Y $ is Hermitian and positive definite.
\end{lemma}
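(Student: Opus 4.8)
The plan is to reduce every claim to the single algebraic identity $(A \otimes B)(C \otimes D) = (AC) \otimes (BD)$ (the mixed-product property), valid whenever the ordinary products $AC$ and $BD$ are defined, together with the observation that $(A \otimes B)^* = A^* \otimes B^*$, which is immediate from the block definition of the Kronecker product and entrywise conjugation. From these two facts it follows in particular that the Kronecker product of two unitary matrices is unitary, and that the Kronecker product of two diagonal matrices is diagonal with entries the pairwise products of the original diagonal entries; these two sub-facts carry most of the load.

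For item 1, I would combine $(X \otimes Y)^* = X^* \otimes Y^*$ with the mixed-product property to write $(X \otimes Y)(X \otimes Y)^* = (XX^*) \otimes (YY^*)$ and $(X \otimes Y)^*(X \otimes Y) = (X^*X) \otimes (Y^*Y)$; normality of $X$ and $Y$ makes the two right-hand sides equal, giving normality of $X \otimes Y$. For item 2, since $X$ and $Y$ are normal, the spectral theorem provides unitary $U_1, U_2$ and diagonal $D_1 = \mathrm{diag}(\lambda_1(X), \ldots, \lambda_{m_1}(X))$, $D_2 = \mathrm{diag}(\lambda_1(Y), \ldots, \lambda_{m_2}(Y))$ with $X = U_1 D_1 U_1^*$ and $Y = U_2 D_2 U_2^*$. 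Applying the mixed-product property twice yields $X \otimes Y = (U_1 \otimes U_2)(D_1 \otimes D_2)(U_1 \otimes U_2)^*$, a unitary conjugation of the diagonal matrix $D_1 \otimes D_2$ whose diagonal entries are exactly the products $\lambda_i(X)\lambda_j(Y)$; hence $\sigma(X \otimes Y) = \{\lambda_i(X)\lambda_j(Y)\}$.

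For items 3 and 4, I would run the same argument using the singular value decompositions $X = U_1 \Sigma_1 V_1^*$ and $Y = U_2 \Sigma_2 V_2^*$; this requires no normality hypothesis, so it applies to arbitrary $X, Y$. The mixed-product property gives $X \otimes Y = (U_1 \otimes U_2)(\Sigma_1 \otimes \Sigma_2)(V_1 \otimes V_2)^*$ with $U_1 \otimes U_2$ and $V_1 \otimes V_2$ unitary, so the singular values of $X \otimes Y$ are precisely the numbers $s_i(X)s_j(Y)$. The rank is the count of nonzero such products, which factors as $\rank(X)\rank(Y)$; the spectral norm is the largest such product, $s_1(X)s_1(Y) = \Vert X \Vert \, \Vert Y \Vert$; and the trace norm is $\sum_{i,j} s_i(X)s_j(Y) = (\sum_i s_i(X))(\sum_j s_j(Y)) = \Vert X \Vert_1 \, \Vert Y \Vert_1$.

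Finally, for the closing remark: if $X, Y$ are Hermitian then $(X \otimes Y)^* = X^* \otimes Y^* = X \otimes Y$ by item 1, so $X \otimes Y$ is Hermitian; and if in addition $X, Y$ are positive definite, then every $\lambda_i(X) > 0$ and every $\lambda_j(Y) > 0$, whence item 2 shows all eigenvalues $\lambda_i(X)\lambda_j(Y)$ of $X \otimes Y$ are positive. There is no genuine obstacle in this lemma; the only point demanding a little care is verifying the mixed-product identity and the unitarity of $U_1 \otimes U_2$ directly from the definitions, after which every assertion is routine bookkeeping.
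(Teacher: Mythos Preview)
Your argument is correct and is the standard textbook derivation: the mixed-product identity $(A\otimes B)(C\otimes D)=(AC)\otimes(BD)$ together with $(A\otimes B)^*=A^*\otimes B^*$ immediately yields item~1, the spectral decomposition of normal matrices then gives item~2, and the singular value decomposition gives items~3 and~4 as you describe. There is no gap.

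As for comparison with the paper: the paper does not actually prove this lemma. It is stated as a collection of ``basic results concerning tensor-products matrices'' and referred to Bhatia's \emph{Matrix Analysis} without further argument. So you have supplied a full self-contained proof where the paper is content with a citation; what you wrote is essentially what one finds in the cited reference.
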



Given a bivariate function $ g : [-\pi,\pi]^2 \rightarrow \mathbb{R} $ belonging to the space $ L_1([-\pi,\pi]^2) $, we can associate to $g$ a family of two-level Hermitian Toeplitz matrices $ \{ T_{m_1,m_2}(g) \} $ parametrized by two indices $m_1,m_2$ and defined blockwise for every $ m_1, m_2 \geq 1 $: 

$$ T_{m_1,m_2}(g) := \begin{bmatrix}
G_0       & G_{-1} & \cdots & \cdots & G_{-(m_1-1)} \\
G_1       & \ddots & \ddots &        & \vdots       \\
\vdots    & \ddots & \ddots & \ddots & \vdots       \\
\vdots    &        & \ddots & \ddots & G_{-1}       \\
G_{m_1-1} & \cdots & \cdots & G_1    & G_0
\end{bmatrix} \in \mathbb{C}^{m_1 m_2 \times m_1 m_2}, $$
where for every $ k \in \mathbb{Z} $,

$$ G_k := \begin{bmatrix}
g_{k,0}     & g_{k,-1} & \cdots & \cdots  & g_{k,-(m_2-1)} \\
g_{k,1}     & \ddots   & \ddots &         & \vdots         \\
\vdots      & \ddots   & \ddots & \ddots  & \vdots         \\
\vdots      &          & \ddots & \ddots  & g_{k,-1}       \\
g_{k,m_2-1} & \cdots   & \cdots & g_{k,1} & g_{k,0}
\end{bmatrix} \in \mathbb{C}^{m_2 \times m_2}, $$
and for every $ k, l \in \mathbb{Z} $,

$$ g_{k,l} := \frac{1}{(2\pi)^2} \int_{-\pi}^{\pi} \int_{-\pi}^{\pi} 
              g(\theta_1,\theta_2) \e^{-\i(k\theta_1+l\theta_2)} d\theta_1 d\theta_2 $$
is the $(k,l)$ Fourier coefficient of $g$. A Szeg\"o-like result, analogous to \cite[Theorem 2]{galerkin_gbsp}, holds for two-level sequences (see \cite{serra94} and \cite{tilli98}).

\begin{theorem} \label{th:poly05}

Let $ g \in L_1([-\pi,\pi]^2) $ be a real-valued function, and let $ m_g := \ess \inf g, M_g := \ess \sup g $, and suppose $ m_g < M_g $. Then:

\begin{itemize}
\item $ \sigma(T_{m_1,m_2}(g)) \subset (m_g,M_g), \quad \forall m_1, m_2 \geq 1 $;
\item $ \{ T_{m_1,m_2}(g) \} \sim_{\lambda} g $.
\end{itemize}

\end{theorem}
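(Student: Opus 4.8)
The plan is to reduce the two-level statement to the one-level Szeg\H{o}-type theorem (the analogue of \cite[Theorem 2]{galerkin_gbsp}) that is already available, exploiting the block-Toeplitz structure of $T_{m_1,m_2}(g)$ and the tensor-product lemma, Lemma~\ref{lem:poly01}. First I would treat the spectral localization. Since $g$ is real-valued, each $G_k$ satisfies $G_{-k}=G_k^*$, so $T_{m_1,m_2}(g)$ is Hermitian, and for any unit vector $\bfv\in\CC^{m_1m_2}$ the Rayleigh quotient $\bfv^* T_{m_1,m_2}(g)\bfv$ equals $\frac{1}{(2\pi)^2}\int_{-\pi}^{\pi}\int_{-\pi}^{\pi} g(\theta_1,\theta_2)\,|P_{\bfv}(\theta_1,\theta_2)|^2\,\d\theta_1\,\d\theta_2$, where $P_{\bfv}$ is the trigonometric polynomial whose coefficients are the entries of $\bfv$ arranged on the grid $\{0,\dots,m_1-1\}\times\{0,\dots,m_2-1\}$; this is the standard computation for multilevel Toeplitz forms. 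Normalizing, $\frac{1}{(2\pi)^2}\iint |P_{\bfv}|^2=\|\bfv\|^2=1$, so the Rayleigh quotient is a weighted average of $g$ with a nonnegative weight of total mass one, whence it lies in $[m_g,M_g]$. To get strict inclusion in the open interval $(m_g,M_g)$ one argues that $|P_{\bfv}|^2$, being a nonzero trigonometric polynomial, cannot be supported (up to measure zero) on the level set $\{g=m_g\}$ nor on $\{g=M_g\}$ — these sets have the property that $g$ is essentially strictly between $m_g$ and $M_g$ off a null set intersecting every open box — so the average is strictly interior; this is exactly the same device used in the one-level case, carried over verbatim to $[-\pi,\pi]^2$.

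For the second bullet, $\{T_{m_1,m_2}(g)\}\sim_\lambda g$, the plan is to invoke the known one-level distribution result together with an approximation/density argument. The cleanest route is: (i) prove it first for $g$ a bounded (real) trigonometric polynomial in two variables, where $T_{m_1,m_2}(g)$ is banded in both levels and one can compute moments $\frac{1}{m_1m_2}\operatorname{tr}\big((T_{m_1,m_2}(g))^r\big)$ and show, by the usual counting of closed paths on the two-dimensional index grid (boundary effects being $O(1/\min(m_1,m_2))$), that they converge to $\frac{1}{(2\pi)^2}\iint g^r$; then by Weierstrass (the moments determine the limiting measure since $g\in L^\infty$ here) one gets $\sim_\lambda g$ for polynomial symbols. (ii) Extend to general $g\in L_1([-\pi,\pi]^2)$ by an approximation-class argument: choose trigonometric polynomials $g_k\to g$ in $L_1$, note $T_{m_1,m_2}(g_k)$ approximates $T_{m_1,m_2}(g)$ in the sense that $\frac{1}{m_1m_2}\|T_{m_1,m_2}(g)-T_{m_1,m_2}(g_k)\|_1\le\|g-g_k\|_{L_1}$ (this $1$-norm bound, which is the two-level analogue of a well-known one-level estimate, follows from the Fourier-coefficient formula for $g_{k,l}$ and the triangle inequality over diagonals), and then quote the standard fact that eigenvalue distribution is preserved under such trace-norm-small perturbations in the limit. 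This is precisely how \cite{serra94} and \cite{tilli98} proceed, so I would cite them and only sketch the polynomial case and the perturbation step.

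The main obstacle I anticipate is not any single hard inequality but rather handling the \emph{two} independent indices correctly: one must be careful that the limit $m\to\infty$ in the distribution relation is understood jointly in $(m_1,m_2)$ (or along any sequence with $\min(m_1,m_2)\to\infty$), so the boundary/corner corrections to the moment counts must be shown uniformly $o(1)$ as $\min(m_1,m_2)\to\infty$, not merely as each $m_i\to\infty$ separately. A secondary subtlety is the strict-inclusion claim: one has to know that the level sets $\{g=m_g\}$ and $\{g=M_g\}$ cannot contain the ``mass'' of any nonzero bivariate trigonometric polynomial squared — i.e. that a nonzero such polynomial is nonzero a.e. — which is true because its zero set is a proper real-analytic variety of measure zero. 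Given that all of these are standard in the multilevel Toeplitz literature and the paper explicitly points to \cite{serra94,tilli98} and to the one-level analogue \cite[Theorem~2]{galerkin_gbsp}, the write-up would state the reduction, carry out the Rayleigh-quotient localization in the two-variable notation, and refer to those sources for the moment computation and the perturbation argument.
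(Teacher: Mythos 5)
The paper offers no proof of this theorem at all: it is imported as a known result, with the preceding sentence pointing to \cite{serra94} and \cite{tilli98} as the sources, so there is no ``paper's approach'' to compare against beyond that citation. Your sketch is a correct reconstruction of the standard argument those references (and the one-level analogue \cite[Theorem~2]{galerkin_gbsp}) employ, and I see no gap in it. Two small points worth tightening in a write-up. First, for the strict inclusion the cleanest phrasing is: if $\bfv^* T_{m_1,m_2}(g)\bfv = m_g$ for a unit vector $\bfv$, then $\frac{1}{(2\pi)^2}\iint (g-m_g)\,|P_{\bfv}|^2 = 0$ with $g-m_g\ge 0$ a.e.\ and $|P_{\bfv}|^2>0$ a.e.\ (the zero set of a nonzero bivariate trigonometric polynomial has measure zero, as you note), forcing $g=m_g$ a.e.\ and contradicting $m_g<M_g$; your formulation in terms of level sets is more convoluted than necessary but rests on the same fact. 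Second, your caution about the joint limit in $(m_1,m_2)$ is well placed in general, but in this paper's application the two indices are tied together by $n_2=\nu n_1$, so the distribution relation is only ever invoked along a single-index sequence with $\min(m_1,m_2)\to\infty$ automatic.
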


\noindent With the last result, we can relate tensor-products and Toeplitz matrices. Given two functions $ f, h : [-\pi,\pi] \rightarrow \mathbb{R} $ in $ L_1([-\pi,\pi]) $, we can construct the tensor-product function, belonging to $ L_1([-\pi,\pi]^2) $:

$$  f \otimes h : [-\pi,\pi]^2 \rightarrow \mathbb{R}, \quad 
   (f \otimes h)(\theta_1,\theta_2) := f(\theta_1) h(\theta_2). $$
So, we can consider the three families of Hermitian Toeplitz matrices $ \{ T_{m_1}(f) \}, \{ T_{m_2}(h) \} $ and $ \{ T_{m_1,m_2}(f \otimes h) \} $. By computing directly we obtain a commutative property between the operation of tensor-product and the Toeplitz operator.

\begin{lemma} \label{lem:poly02}

Let $ f, h \in L_1([-\pi,\pi]) $ be real-valued functions. Then, for all $ m_1, m_2 \geq 1 $,

$$ T_{m_1}(f) \otimes T_{m_2}(h) = T_{m_1,m_2}(f \otimes h). $$

\end{lemma}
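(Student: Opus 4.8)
The plan is to reduce the asserted matrix identity to the elementary observation that the Fourier coefficients of a tensor-product symbol factorize, and then to compare the two sides block by block and entry by entry. First I would note that, by Fubini's theorem, for all $k,l\in\ZZ$ the $(k,l)$ Fourier coefficient of $f\otimes h$ is
\[
(f\otimes h)_{k,l}=\frac{1}{(2\pi)^2}\int_{-\pi}^{\pi}\int_{-\pi}^{\pi} f(\theta_1)h(\theta_2)\,\e^{-\i(k\theta_1+l\theta_2)}\,d\theta_1\,d\theta_2=f_k\,h_l,
\]
where $f_k$ and $h_l$ denote the ordinary one-dimensional Fourier coefficients of $f$ and of $h$ respectively; the integrability of $f\otimes h$ on $[-\pi,\pi]^2$, which legitimizes the use of Fubini, follows from $f,h\in L_1([-\pi,\pi])$.

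Next I would unwind the two definitions. On one side, $T_{m_1,m_2}(f\otimes h)$ is, by construction, the $m_1\times m_1$ block matrix whose $(i,j)$ block is $G_{i-j}$, and $G_{i-j}$ is the $m_2\times m_2$ Toeplitz matrix whose $(r,s)$ entry is $(f\otimes h)_{i-j,\,r-s}$. By the factorization above this entry equals $f_{i-j}\,h_{r-s}$, hence the $(i,j)$ block of $T_{m_1,m_2}(f\otimes h)$ is exactly $f_{i-j}\,T_{m_2}(h)$. On the other side, by the definition of the Kronecker product, $T_{m_1}(f)\otimes T_{m_2}(h)$ is the $m_1\times m_1$ block matrix whose $(i,j)$ block is $\big(T_{m_1}(f)\big)_{ij}\,T_{m_2}(h)=f_{i-j}\,T_{m_2}(h)$, since $T_{m_1}(f)$ is Toeplitz with symbol $f$. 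The two block matrices therefore coincide, and both lie in $\CC^{m_1m_2\times m_1m_2}$, so the stated equality holds.

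I do not expect any genuine obstacle here: the only mathematical ingredient is Fubini's theorem for the separation of the double integral, and the rest is bookkeeping that consists in being consistent with the block-indexing and entry-indexing conventions fixed earlier for $T_{m_1,m_2}(g)$, $G_k$, and $g_{k,l}$. The one point worth stating explicitly in the write-up is that the ordering of indices in the Kronecker product $T_{m_1}(f)\otimes T_{m_2}(h)$ matches the ordering implicit in the block definition of $T_{m_1,m_2}(\cdot)$ (outer index $m_1$, inner index $m_2$), so that the identification of blocks is the correct one.
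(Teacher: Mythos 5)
Your proof is correct and is exactly the direct computation the paper alludes to (the lemma is stated there with only the remark ``by computing directly,'' with no written-out proof): factor the Fourier coefficients as $(f\otimes h)_{k,l}=f_k h_l$ via Fubini and match the $(i,j)$ block of each side with $f_{i-j}\,T_{m_2}(h)$ under the paper's block-indexing convention. Nothing further is needed.
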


\section{More results on spectral distribution}

The following results are a consequence of Theorem \ref{th:poly03-arrows}.

\begin{lemma} \label{specdist_more}
They hold:
\begin{enumerate}
\item $ \lambda_{\min}(B_{n,p}^{\spQ_{\alpha}}) \searrow 0 $ and $ \lambda_{\max}(B_{n,p}^{\spQ_{\alpha}}) \rightarrow M_{f_p} $ as $ n \rightarrow \infty $; 
\item for each fixed $ j \geq 1 $,
$$ \lambda_j(B_{n,p}^{\spQ_{\alpha}}) \sim_{n \rightarrow \infty} \frac{j^2 \pi^2}{n^2}, $$
where $ \lambda_1(B_{n,p}^{\spQ_{\alpha}}) \leq \cdots \leq \lambda_{n+p-2}(B_{n,p}^{\spQ_{\alpha}}) $ are the eigenvalues of $B_{n,p}^{\spQ_{\alpha}}$ in non-decreasing order; 
\item $ \lambda_{\min}(B_{n,p}^{\spQ_{n\alpha}}) \searrow 0 $ and $ \lambda_{\max}(B_{n,p}^{\spQ_{n\alpha}}) \nearrow M_{f_p^{\spQ_{\alpha}}} $ as $ n \rightarrow \infty $; 
\item $ \lambda_{\min}(C_{n,p}^{\spQ_{\alpha}}) \rightarrow m_{h_p} $ and $ \lambda_{\max}(C_{n,p}^{\spQ_{\alpha}}) \nearrow 1 $ as $ n \rightarrow \infty $; 
\item $ \lambda_{\min}(C_{n,p}^{\spQ_{n\alpha}}) \searrow m_{h_p^{\spQ_{\alpha}}} $ and $ \lambda_{\max}(C_{n,p}^{\spQ_{n\alpha}}) \nearrow 1 $ as $ n \rightarrow \infty $. 
\end{enumerate}
\end{lemma}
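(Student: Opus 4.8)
The plan is to read off every item from Theorem~\ref{th:poly03-arrows} applied to a suitable Toeplitz-generating symbol, after first identifying what the matrices $B_{n,p}^{\spQ_\alpha}$, $B_{n,p}^{\spQ_{n\alpha}}$, $C_{n,p}^{\spQ_\alpha}$, $C_{n,p}^{\spQ_{n\alpha}}$ actually are. From the companion paper \cite{galerkin_gbsp} these are the normalized stiffness- and mass-type matrices whose spectral symbols are (up to the $1/n^2$, resp.\ $n^0$, scaling) real-valued functions $f_p$, $f_p^{\spQ_\alpha}$, $h_p$, $h_p^{\spQ_\alpha}$ on $[-\pi,\pi]$; the relevant facts I would invoke are that $f_p\ge 0$ with $\ess\inf f_p = m_{f_p} = 0$ (the stiffness symbol vanishes at $\theta=0$), $\ess\sup f_p = M_{f_p}$, while $h_p$ has $\ess\inf h_p = m_{h_p}>0$ and $\ess\sup h_p = M_{h_p}=1$ (the mass symbol is normalized to attain value $1$). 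The GB-spline symbols $f_p^{\spQ_\alpha}$, $h_p^{\spQ_\alpha}$ satisfy the same structural properties with possibly different extremal values, again with the stiffness symbol vanishing at the origin and the mass symbol having supremum $1$.

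The main step is then: for item~1, write $B_{n,p}^{\spQ_\alpha} = \tfrac{1}{n^2} T_{n+p-2}(f_p)$ up to the usual asymptotically negligible low-rank/small-norm corrections coming from the boundary basis functions (these corrections do not affect monotone convergence of the extreme eigenvalues, by a Weyl-type interlacing argument), so that $n^2\lambda_{\min}(B_{n,p}^{\spQ_\alpha}) = \lambda_{\min}(T_{n+p-2}(f_p)) \searrow m_{f_p}=0$ and $n^2\lambda_{\max}(B_{n,p}^{\spQ_\alpha}) = \lambda_{\max}(T_{n+p-2}(f_p)) \nearrow M_{f_p}$; dividing back by $n^2$ gives $\lambda_{\max}(B_{n,p}^{\spQ_\alpha})\to M_{f_p}$ (note: no monotonicity claimed for $\lambda_{\max}$ here, consistent with the statement, since the extra $1/n^2$ factor destroys it, whereas $\lambda_{\min}\searrow 0$ survives because $0$ is the limit). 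Item~2 is exactly Parter's theorem (the second displayed theorem in Section~4): $f_p$ has a simple zero minimum at $\theta_{\min}=0$ with $f_p''(0)>0$, i.e.\ the case $s=1$, so $\lambda_j(T_{n+p-2}(f_p)) - 0 \sim c_{1,j}\,\tfrac{f_p''(0)}{2}\,\tfrac{1}{(n+p-2)^2}$, and by the remark $c_{1,j}=j^2\pi^2$; absorbing $\tfrac{f_p''(0)}{2}$ and replacing $(n+p-2)^2$ by $n^2$ (a harmless $1+o(1)$ factor), and restoring the $1/n^2$ normalization of $B_{n,p}^{\spQ_\alpha}$, yields $\lambda_j(B_{n,p}^{\spQ_\alpha}) \sim j^2\pi^2/n^2$ — here I may need to quote the explicit value $f_p''(0)=2$ (or whatever normalization \cite{galerkin_gbsp} fixes) so that the constant comes out clean, but in any case this is a one-line symbol computation. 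Items~3, 4, 5 are literal repetitions of the argument: $B_{n,p}^{\spQ_{n\alpha}}$ has symbol $f_p^{\spQ_\alpha}$ with $\ess\inf=0$, $\ess\sup=M_{f_p^{\spQ_\alpha}}$, but now the scaling is such that both extremes are genuinely monotone (hence $\nearrow M_{f_p^{\spQ_\alpha}}$, $\searrow 0$); $C_{n,p}^{\spQ_\alpha}$ and $C_{n,p}^{\spQ_{n\alpha}}$ have the mass symbols $h_p$, $h_p^{\spQ_\alpha}$ with supremum $1$, so $\lambda_{\max}\nearrow 1$, and $\lambda_{\min}\to m_{h_p}$, resp.\ $\searrow m_{h_p^{\spQ_\alpha}}$, again directly from Theorem~\ref{th:poly03-arrows}.

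The obstacle I expect is not the Toeplitz analysis but the bookkeeping: making precise the reduction "$B_{n,p}^{\spQ_\alpha} \approx \tfrac{1}{n^2}T_{n+p-2}(f_p)$" in a way that transfers \emph{monotone} convergence of the extreme eigenvalues and the \emph{sharp asymptotics} of the $j$-th eigenvalue, since the actual Galerkin matrices differ from pure Toeplitz matrices by boundary modifications. For the extreme-eigenvalue statements, a minimax/interlacing comparison suffices (the correction has bounded rank $O(p)$ and norm $o(1)$ after scaling, so by Cauchy interlacing the extreme eigenvalues are squeezed between $\lambda_{\min/\max}$ of nearby Toeplitz matrices, preserving the limit and — where present — the monotonicity). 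For item~2 the cleanest route is to cite the corresponding result in \cite{galerkin_gbsp} (or \cite{GMPSS14} for the polynomial prototype) that already establishes that $\{n^2 B_{n,p}^{\spQ_\alpha}\}$ and $\{T_{n+p-2}(f_p)\}$ have the same fixed-index eigenvalue asymptotics, reducing everything to Parter's theorem as above. I would therefore structure the write-up as: (i) recall the symbols and their extremal values and zero structure from \cite{galerkin_gbsp}; (ii) dispatch items 1, 3, 4, 5 in one paragraph via Theorem~\ref{th:poly03-arrows} plus the interlacing remark; (iii) dispatch item 2 via Parter's theorem and the remark giving $c_{1,j}=j^2\pi^2$.
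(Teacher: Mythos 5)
Your overall strategy --- read the extreme-eigenvalue claims off Theorem~\ref{th:poly03-arrows} and item~2 off Parter's theorem with $c_{1,j}=j^2\pi^2$ --- is indeed the paper's intent, but your identification of the matrices is wrong, and the error is not cosmetic. As recalled in the paper's Section on spectral distribution (quoting Lemmas 6 and 10 of \cite{galerkin_gbsp}), one has $B_{n,p}^{\spQ_{\alpha}}=T_{n+p-2}(f_p^{\spQ_{\alpha/n}})$ and $C_{n,p}^{\spQ_{\alpha}}=T_{n+p-2}(h_p^{\spQ_{\alpha/n}})$ \emph{exactly}: $B$ and $C$ are by definition the Toeplitz parts of the decompositions of $K_{n,p}^{\spQ_\alpha}$ and of the normalized mass matrix, the boundary corrections being the separate matrices $R$ and $S$ of \cite[Eqs.~(67),(75)]{galerkin_gbsp}. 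There is no $1/n^2$ prefactor and there are no low-rank corrections left to absorb. Your reduction $B_{n,p}^{\spQ_{\alpha}}\approx\tfrac{1}{n^2}T_{n+p-2}(f_p)$ is in fact incompatible with the very statement being proved: it would force $\lambda_{\max}(B_{n,p}^{\spQ_{\alpha}})=\tfrac{1}{n^2}\lambda_{\max}(T_{n+p-2}(f_p))\to 0$ rather than $M_{f_p}$ (your step ``dividing back by $n^2$ gives $\lambda_{\max}\to M_{f_p}$'' is an algebra slip), and it would give $\lambda_j\sim c/n^4$ in item~2 instead of $j^2\pi^2/n^2$. The correct accounting is that $B$ is an unscaled Toeplitz matrix whose symbol has a double zero at $\theta=0$; Parter's theorem with $s=1$ and $m=n+p-2\sim n$ then gives $\lambda_j\sim c_{1,j}\,\tfrac{f_p''(0)}{2}\,n^{-2}=j^2\pi^2 n^{-2}$, using $f_p''(0)=2$ (which follows from the normalization $h_{p-1}(0)=1$ in the cited works). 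Likewise, your interlacing machinery for ``boundary modifications'' addresses a non-issue, and would not suffice where you invoke it anyway: a rank-$O(p)$ perturbation preserves the limits of the extreme eigenvalues but not their monotonicity in $n$, which is exactly what several items assert.

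The distinction you try to explain by scaling --- why some arrows are monotone and some are plain limits --- is actually governed by whether the generating symbol depends on $n$. In the non-nested cases (items 3 and 5) the symbol is $f_p^{\spQ_{n\alpha/n}}=f_p^{\spQ_{\alpha}}$, resp.\ $h_p^{\spQ_{\alpha}}$, hence fixed, and Theorem~\ref{th:poly03-arrows} applies verbatim, yielding monotone convergence to $m$ and $M$ of that fixed symbol. In the nested cases (items 1, 2 and 4) the symbol is $f_p^{\spQ_{\alpha/n}}$, resp.\ $h_p^{\spQ_{\alpha/n}}$, which varies with $n$ and merely converges to $f_p$, resp.\ $h_p$; this is why $\lambda_{\max}(B_{n,p}^{\spQ_\alpha})\to M_{f_p}$ and $\lambda_{\min}(C_{n,p}^{\spQ_\alpha})\to m_{h_p}$ carry no monotonicity claim, and it is also the one genuine gap that an honest proof must fill: Theorem~\ref{th:poly03-arrows} and Parter's theorem are stated for a fixed symbol, so a perturbation step is needed, e.g.\ via $\Vert T_m(g_n)-T_m(g)\Vert\le\Vert g_n-g\Vert_\infty$ together with the convergence $f_p^{\spQ_{\alpha/n}}\to f_p$ (and, for item~2, control of the behaviour of $f_p^{\spQ_{\alpha/n}}$ near its minimum uniformly in $n$). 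Your write-up does not engage with this point at all, because the misidentification of $B$ and $C$ hides it.
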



\section{Two-dimensional linear elliptic differential problem} 

We can extend the problem \cite[Eq. (1)]{galerkin_gbsp} to the \texttt{2D} domain $ \Omega = (0,1)^2 $, that is:

\begin{equation} \left\{ \begin{aligned} \label{eq:poly83}
& -\Delta u(x,y) + \beta \cdot \nabla u(x,y) + \gamma u(x,y) = f(x,y), \qquad \forall(x,y) \in \Omega, \\
& u(x,y) = 0, \hspace{6.72cm} \forall(x,y) \in \partial \Omega.
\end{aligned} \right. 
\end{equation}
with $ f \in L^2((0,1)^2), \mbox{ } \beta = [\beta_1 \mbox{ } \beta_2]^T \in \mathbb{R}^2,
                           \mbox{ } \gamma \geq 0 $.
We want to approximate the weak solution of (\ref{eq:poly83}) by using the Galerkin method \cite[Eq. (3)]{galerkin_gbsp}; $\mathcal{W}$, approximation space, is now chosen in a manner that takes account of the two-dimensional structure. \\
For the sake of simplicity, we suppose to work in the nested case; we briefly address also the non-nested case while considering, at the end, the main result of this Section, which will be proposed for both cases.
\\ Also, for a better comparison with this study in the polynomial case, we use the inverse lexicographical ordering of \cite[Section 5]{GMPSS14}, rather than the standard lexicographical ordering used in \cite[Section 6]{galerkin_gbsp}. \\
In light of this, we consider two univariate B-spline bases from \cite[Section 4]{galerkin_gbsp}, one for every direction ($x$ and $y$): the basis $ \{ N_{i,p_1}^{\spQ_{\alpha_1}}(x), i = 1, \ldots, n_1+p_1 \} $ over the knot sequence
\begin{equation*}
s_1 = \cdots = s_{p_1+1} = 0 < s_{p_1+2} < \cdots < s_{p_1+n_1} < 1 = s_{p_1+n_1+1} = \ldots = s_{2p_1+n_1+1},
\end{equation*}
where
\begin{equation*}
s_{p_1+i+1} := \frac{i}{n_1}, \qquad \forall i = 0, \ldots, n_1,
\end{equation*}
and the basis $ \{ N_{i,p_2}^{\spQ_{\alpha_2}}(y), i = 1, \ldots, n_2+p_2 \} $ over the knot sequence
\begin{equation*}
t_1 = \cdots = t_{p_2+1} = 0 < t_{p_2+2} < \cdots < t_{p_2+n_2} < 1 = t_{p_2+n_2+1} = \ldots = t_{2p_2+n_2+1},
\end{equation*}
where
\begin{equation*}
t_{p_2+i+1} := \frac{i}{n_2}, \qquad \forall i = 0, \ldots, n_2.
\end{equation*} 
With these, we construct our bivariate tensor-product B-spline basis $ \{ N_{(i,j),(p_1,p_2)}^{\spQQ_{\balpha}}, i = 1, \ldots, n_1+p_1, j = 1, \ldots, n_2+p_2 \} $, like
\begin{equation*}
   N_{(i,j),(p_1,p_2)}^{\spQQ_{\balpha}}(x,y) 
:= ( N_{i,p_1}^{\spQ_{\alpha_1}} \otimes N_{j,p_2}^{\spQ_{\alpha_2}} )(x,y)
 = N_{i,p_1}^{\spQ_{\alpha_1}}(x) N_{j,p_2}^{\spQ_{\alpha_2}}(y).
\end{equation*}
The space $\mathcal{W}$ in the Galerkin problem \cite[Eq. (3)]{galerkin_gbsp} is now chosen as $ \mathcal{W}_{(n_1,n_2),(p_1,p_2)}^{\spQQ_{\balpha}} $, where
\begin{equation} \label{eq:poly84}
   \mathcal{W}_{(n_1,n_2),(p_1,p_2)}^{\spQQ_{\balpha}} 
:= \langle N_{(i,j),(p_1,p_2)}^{\spQQ_{\balpha}} : i = 2, \ldots, n_1+p_1-1, \mbox{ } j = 2, \ldots, n_2+p_2-1 \rangle.
\end{equation}
We order lexicographically the element of the basis (\ref{eq:poly84}), as follows
\begin{equation} \label{eq:poly85}
\varphi_{(n_1+p_1-2)(j-1)+i}^{\spQQ_{\balpha}} := N_{(i+1,j+1),(p_1,p_2)}^{\spQQ_{\balpha}}
\end{equation}
with $ i = 1, \ldots, n_1+p_1-2, \mbox{ } j = 1, \ldots, n_2+p_2-2 $. \\
Once we have fixed the basis (\ref{eq:poly84}) and the order (\ref{eq:poly85}), we devise a linear system \cite[Eq. (4)]{galerkin_gbsp} from the Galerkin problem \cite[Eq. (3)]{galerkin_gbsp}. The stiffness matrix $A$ in \cite[Eq. (4)]{galerkin_gbsp} depends from $n_1,n_2$ and $p_1,p_2$, and can be therefore denoted by:
\begin{equation*}
  A_{(n_1,n_2),(p_1,p_2)}^{\spQQ_{\balpha}} := A_{\bf n,p}^{\spQQ_{\balpha}} := A 
= [ a(\varphi_j^{\spQQ_{\balpha}},\varphi_i^{\spQQ_{\balpha}}) ]_{i,j=1}^{(n_1+p_1-2)(n_2+p_2-2)},
\end{equation*}
where $ a(u,v) =   \int_0^1 \int_0^1 \nabla u \cdot \nabla v \mbox{ } dx dy 
                 + \beta \int_0^1 \int_0^1 \nabla u \mbox{ } v \mbox{ } dx dy
                 + \gamma \int_0^1 \int_0^1 u v \mbox{ } dx dy $,
see \cite[Section 2.1.1]{galerkin_gbsp}

\subsection{Construction of the matrices $ A_{(n_1,n_2),(p_1,p_2)}^{\spQ_{\balpha}} $}
\label{sec:annpp} 

Using the same integration rules of the one-dimensional case, we obtain
\begin{equation} \label{eq:poly86}
A_{\bf n,p}^{\spQQ_{\balpha}} = \frac{n_1}{n_2} \widehat{K}_{\bf n,p}^{\spQQ_{\balpha}}
                               + \frac{n_2}{n_1} \widetilde{K}_{\bf n,p}^{\spQQ_{\balpha}}
                               + \frac{\beta_1}{n_2} \widehat{H}_{\bf n,p}^{\spQQ_{\balpha}}
                               + \frac{\beta_2}{n_1} \widetilde{H}_{\bf n,p}^{\spQQ_{\balpha}}
                               + \frac{\gamma}{n_1 n_2} M_{\bf n,p}^{\spQQ_{\balpha}},
\end{equation}
where
\begin{multline*}
   \widehat{K}_{\bf n,p}^{\spQQ_{\balpha}} 
:= M_{n_2,p_2}^{\spQ_{\alpha_2}} \otimes K_{n_1,p_1}^{\spQ_{\alpha_1}}, \quad
   \widetilde{K}_{\bf n,p}^{\spQQ_{\balpha}} 
:= K_{n_2,p_2}^{\spQ_{\alpha_2}} \otimes M_{n_1,p_1}^{\spQ_{\alpha_1}}, \\
   \widehat{H}_{\bf n,p}^{\spQQ_{\balpha}} 
:= M_{n_2,p_2}^{\spQ_{\alpha_2}} \otimes H_{n_1,p_1}^{\spQ_{\alpha_1}}, \quad
   \widetilde{H}_{\bf n,p}^{\spQQ_{\balpha}} 
:= H_{n_2,p_2}^{\spQ_{\alpha_2}} \otimes M_{n_1,p_1}^{\spQ_{\alpha_1}},
\end{multline*}
$  M_{\bf n,p}^{\spQQ_{\balpha}} 
:= M_{n_2,p_2}^{\spQ_{\alpha_2}} \otimes M_{n_1,p_1}^{\spQ_{\alpha_1}} $. \\ \\
A simplifying case is given by $ n_1 = n_2 = n $ and $ p_1 = p_2 = p $, for which, by defining $ K_{\bf n,p}^{\spQQ_{\balpha}} := \widehat{K}_{\bf n,p}^{\spQQ_{\balpha}} + \widetilde{K}_{\bf n,p}^{\spQQ_{\balpha}} $
\begin{equation}
  A_{\bf n,p}^{\spQQ_{\balpha}} 
= K_{\bf n,p}^{\spQQ_{\balpha}} + \frac{\beta_1}{n} \widehat{H}_{\bf n,p}^{\spQQ_{\balpha}}
  + \frac{\beta_2}{n} \widetilde{H}_{\bf n,p}^{\spQQ_{\balpha}} 
  + \frac{\gamma}{n^2} M_{\bf n,p}^{\spQQ_{\balpha}} 
\end{equation}

\subsection{Spectral distribution} \label{sec:spectral_2d}

We can study now, by fixing $ p_1, p_2 \geq 1 $, the spectral distribution of the sequence of the matrices (\ref{eq:poly86}). We mildly assume that the ratio $ \frac{n_2}{n_1} =: \nu $ is constant as $ n_1 \rightarrow \infty $; in this way, $ A_{\bf n,p}^{\spQQ_{\balpha}} $ is an actual sequence of matrices, since only $n_1$ is a free parameter (we can set for $n_2$ the integer value for which the ratio $\nu$ is better approximated). \\
This condition could be released in favor of some lighter requests, but for the sake of simplicity (especially in notation) we choose to work in this framework, which is ultimately not particulary restrictive.

\begin{equation} \label{eq:poly88}
A_{\bf n,p}^{\spQQ_{\balpha}} = \frac{1}{\nu} \widehat{K}_{\bf n,p}^{\spQQ_{\balpha}}
                               + \nu \widetilde{K}_{\bf n,p}^{\spQQ_{\balpha}}
                               + \frac{\beta_1}{\nu n_1} \widehat{H}_{\bf n,p}^{\spQQ_{\balpha}}
                               + \frac{\beta_2}{n_1} \widetilde{H}_{\bf n,p}^{\spQQ_{\balpha}}
                               + \frac{\gamma}{\nu (n_1)^2} M_{\bf n,p}^{\spQQ_{\balpha}}.
\end{equation}
For every $ n_1 \geq 3p_1 + 1 $ such that $ n_2 = \nu n_1 \geq 3 p_2 + 1 $, we decompose the matrices $ \widehat{K}_{\bf n,p}^{\spQQ_{\balpha}} $ and $ \widetilde{K}_{\bf n,p}^{\spQQ_{\balpha}} $ into
\begin{equation} \label{eq:poly89}
  \widehat{K}_{\bf n,p}^{\spQQ_{\balpha}} 
= \widehat{B}_{\bf n,p}^{\spQQ_{\balpha}} + \widehat{R}_{\bf n,p}^{\spQQ_{\balpha}}, \quad
  \widetilde{K}_{\bf n,p}^{\spQQ_{\balpha}} 
= \widetilde{B}_{\bf n,p}^{\spQQ_{\balpha}} + \widetilde{R}_{\bf n,p}^{\spQQ_{\balpha}},
\end{equation}
where
\begin{equation*}
   \widehat{B}_{\bf n,p}^{\spQQ_{\balpha}}   
:= C_{n_2,p_2}^{\spQ_{\alpha_2}} \otimes B_{n_1,p_1}^{\spQ_{\alpha_1}}, \quad
   \widetilde{B}_{\bf n,p}^{\spQQ_{\balpha}} 
:= B_{n_2,p_2}^{\spQ_{\alpha_2}} \otimes C_{n_1,p_1}^{\spQ_{\alpha_1}},
\end{equation*}
and
\begin{align*}
     \widehat{R}_{\bf n,p}^{\spQQ_{\balpha}} 
& := \widehat{K}_{\bf n,p}^{\spQQ_{\balpha}} - \widehat{B}_{\bf n,p}^{\spQQ_{\balpha}}
   =   C_{n_2,p_2}^{\spQ_{\alpha_2}} \otimes R_{n_1,p_1}^{\spQ_{\alpha_1}} 
     + S_{n_2,p_2}^{\spQ_{\alpha_2}} \otimes B_{n_1,p_1}^{\spQ_{\alpha_1}}
     + S_{n_2,p_2}^{\spQ_{\alpha_2}} \otimes R_{n_1,p_1}^{\spQ_{\alpha_1}}, \\
     \widetilde{R}_{\bf n,p}^{\spQQ_{\balpha}} 
& := \widetilde{K}_{\bf n,p}^{\spQQ_{\balpha}} - \widetilde{B}_{\bf n,p}^{\spQQ_{\balpha}}             
   =   B_{n_2,p_2}^{\spQ_{\alpha_2}} \otimes S_{n_1,p_1}^{\spQ_{\alpha_1}}
     + R_{n_2,p_2}^{\spQ_{\alpha_2}} \otimes C_{n_1,p_1}^{\spQ_{\alpha_1}}
     + R_{n_2,p_2}^{\spQ_{\alpha_2}} \otimes S_{n_1,p_1}^{\spQ_{\alpha_1}},
\end{align*}
where the matrices $ B_{n_i,p_i}^{\spQ_{\alpha_i}}, R_{n_i,p_i}^{\spQ_{\alpha_i}}, C_{n_i,p_i}^{\spQ_{\alpha_i}}, S_{n_i,p_i}^{\spQ_{\alpha_i}} $, $ i = 1,2 $, are those of \cite[Eq. (67)]{galerkin_gbsp} and \cite[Eq. (75)]{galerkin_gbsp}. We further define:
\begin{align}
B_{\bf n,p}^{\spQQ_{\balpha}} & := \frac{1}{\nu} \widehat{B}_{\bf n,p}^{\spQQ_{\balpha}} 
                                  + \nu \widetilde{B}_{\bf n,p}^{\spQQ_{\balpha}}, \\
R_{\bf n,p}^{\spQQ_{\balpha}} & := \frac{1}{\nu} \widehat{R}_{\bf n,p}^{\spQQ_{\balpha}} 
                                  + \nu \widetilde{R}_{\bf n,p}^{\spQQ_{\balpha}}.
\end{align} 
Lemmas \cite[Lemma 6]{galerkin_gbsp} and \cite[Lemma 10]{galerkin_gbsp} state that $ B_{n_i,p_i}^{\spQ_{\alpha_i}} = T_{n_i+p_i-2}(f_{p_i}^{\spQ_{\alpha_i/n_i}}) $ and $ C_{n_i,p_i}^{\spQ_{\alpha_i}} = T_{n_i+p_i-2}(h_{p_i}^{\spQ_{\alpha_i/n_i}}) $, for $ i = 1,2 $, $ p_i \geq 1 $ and $ n_i \geq 3p_i+1 $, thanks to which by Lemma \ref{lem:poly02} we obtain
\begin{align*}
    \widehat{B}_{\bf n,p}^{\spQQ_{\balpha}}   
& = T_{n_2+p_2-2}(h_{p_2}^{\spQ_{\alpha_2/n_2}}) \otimes T_{n_1+p_1-2}(f_{p_1}^{\spQ_{\alpha_1/n_1}})
  = T_{n_2+p_2-2,n_1+p_1-2}(h_{p_2}^{\spQ_{\alpha_2/n_2}} \otimes f_{p_1}^{\spQ_{\alpha_1/n_1}}), \\
    \widetilde{B}_{\bf n,p}^{\spQQ_{\balpha}} 
& = T_{n_2+p_2-2}(f_{p_2}^{\spQ_{\alpha_2/n_2}}) \otimes T_{n_1+p_1-2}(h_{p_1}^{\spQ_{\alpha_1/n_1}})
  = T_{n_2+p_2-2,n_1+p_1-2}(f_{p_2}^{\spQ_{\alpha_2/n_2}} \otimes h_{p_1}^{\spQ_{\alpha_1/n_1}}).
\end{align*}
and, by taking account of the linearity of the operator $ T_{n_2+p_2-2,n_1+p_1-2} $:
\begin{equation}
    B_{\bf n,p}^{\spQQ_{\balpha}} 
=   T_{n_2+p_2-2,n_1+p_1-2} \left( \frac{1}{\nu} h_{p_2}^{\spQ_{\alpha_2/n_2}} \otimes f_{p_1}^{\spQ_{\alpha_1/n_1}}
  + \nu f_{p_2}^{\spQ_{\alpha_2/n_2}} \otimes h_{p_1}^{\spQ_{\alpha_1/n_1}} \right).
\end{equation}
We can use now Theorem \ref{th:poly05} to state
\begin{equation*}
\{ \widehat{B}_{\bf n,p}^{\spQQ_{\balpha}} \} \sim_{\lambda} h_{p_2} \otimes f_{p_1}, \quad
\{ \widetilde{B}_{\bf n,p}^{\spQQ_{\balpha}} \} \sim_{\lambda} f_{p_2} \otimes h_{p_1},
\end{equation*}
and
\begin{equation} \label{eq:poly93}
\{ B_{\bf n,p}^{\spQQ_{\balpha}} \} \sim_{\lambda}
\frac{1}{\nu} h_{p_2} \otimes f_{p_1} + \nu f_{p_2} \otimes h_{p_1}.
\end{equation}
By Lemma \ref{lem:poly01} and the inequalities \cite[Eq. (68)]{galerkin_gbsp} and \cite[Eq. (76)]{galerkin_gbsp}, we have
{\small
\begin{align*}
         \rank(\widehat{R}_{\bf n,p}^{\spQQ_{\balpha}})
& \leq   \rank(C_{n_2,p_2}^{\spQ_{\alpha_2}} \otimes R_{n_1,p_1}^{\spQ_{\alpha_1}}) 
       + \rank(S_{n_2,p_2}^{\spQ_{\alpha_2}} \otimes B_{n_1,p_1}^{\spQ_{\alpha_1}})
       + \rank(S_{n_2,p_2}^{\spQ_{\alpha_2}} \otimes R_{n_1,p_1}^{\spQ_{\alpha_1}}) \\
& =      \rank(C_{n_2,p_2}^{\spQ_{\alpha_2}}) \rank(R_{n_1,p_1}^{\spQ_{\alpha_1}}) 
       + \rank(S_{n_2,p_2}^{\spQ_{\alpha_2}}) \rank(B_{n_1,p_1}^{\spQ_{\alpha_1}})
       + \rank(S_{n_2,p_2}^{\spQ_{\alpha_2}}) \rank(R_{n_1,p_1}^{\spQ_{\alpha_1}}) \\
& \leq   \rank(\nu n_1 + p_2 - 2) \cdot (4p_1-2) + (4p_2-2) \cdot (n_1+p_1-2) + (4p_2-2) \cdot (4p_1-2) \\
& =      o((n_1 + p_1 - 2)(\nu n_1 + p_2 - 2)), \quad \mbox{ as } n_1 \rightarrow \infty.
\end{align*} }
and an analogous result holds for $ \widetilde{R}_{\bf n,p}^{\spQQ_{\balpha}} $ , that is, as $ n_1 \rightarrow \infty $
$$ \rank( \widetilde{R}_{\bf n,p}^{\spQQ_{\balpha}}) = o((n_1 + p_1 - 2)(\nu n_1 + p_2 - 2)). $$
Combining the two previous results gives, again for $ n_1 \rightarrow \infty $
\begin{align}
\rank(R_{\bf n,p}^{\spQQ_{\balpha}}) 
& \leq \rank(\widehat{R}_{\bf n,p}^{\spQQ_{\balpha}}) + \rank(\widetilde{R}_{\bf n,p}^{\spQQ_{\balpha}}) \nonumber \\
& =    o((n_1 + p_1 - 2)(\nu n_1 + p_2 - 2)).
\end{align}
Note that $ (n_1 + p_1 - 2)(\nu n_1 + p_2 - 2) $ is the dimension of the matrix $ A_{\bf n,p}^{\spQQ_{\balpha}} $.
\\ The use of \cite[Lemmas 6 and 8-11]{galerkin_gbsp}, along with Lemmas \ref{lem:poly01} and \ref{specdist_more}, and the fact that the matrices $ K_{n_i,p_i}^{\spQ_{\alpha_i}} $, $ H_{n_i,p_i}^{\spQ_{\alpha_i}} $, $ M_{n_i,p_i}^{\spQ_{\alpha_i}} $, $ B_{n_i,p_i}^{\spQ_{\alpha_i}} $, $ C_{n_i,p_i}^{\spQ_{\alpha_i}} $, $ i=1,2 $, are normal, yield
{\small
\begin{align*}
         \Vert R_{\bf n,p}^{\spQQ_{\balpha}} \Vert
& \leq   \frac{1}{\nu} \Vert \widehat{R}_{\bf n,p}^{\spQQ_{\balpha}} \Vert
       + \nu \Vert \widetilde{R}_{\bf n,p}^{\spQQ_{\balpha}} \Vert \\
& =      \frac{1}{\nu} \Vert \widehat{K}_{\bf n,p}^{\spQQ_{\balpha}} - \widehat{B}_{\bf n,p}^{\spQQ_{\balpha}} \Vert
       + \nu \Vert \widetilde{K}_{\bf n,p}^{\spQQ_{\balpha}} - \widetilde{B}_{\bf n,p}^{\spQQ_{\balpha}} \Vert \\
& \leq   \frac{1}{\nu} \Vert M_{n_2,p_2}^{\spQ_{\alpha_2}} \Vert \mbox{ } \Vert K_{n_1,p_1}^{\spQ_{\alpha_1}} \Vert
       + \frac{1}{\nu} \Vert C_{n_2,p_2}^{\spQ_{\alpha_2}} \Vert \mbox{ } \Vert B_{n_1,p_1}^{\spQ_{\alpha_1}} \Vert
       +          \nu  \Vert K_{n_2,p_2}^{\spQ_{\alpha_2}} \Vert \mbox{ } \Vert M_{n_1,p_1}^{\spQ_{\alpha_1}} \Vert
       +          \nu  \Vert B_{n_2,p_2}^{\spQ_{\alpha_2}} \Vert \mbox{ } \Vert C_{n_1,p_1}^{\spQ_{\alpha_1}} \Vert \\
& \leq   \frac{1}{\nu} \Vert M_{n_2,p_2}^{\spQ_{\alpha_2}} \Vert_{\infty} 
         \mbox{ } \Vert K_{n_1,p_1}^{\spQ_{\alpha_1}} \Vert_{\infty}
       + \frac{r}{\nu} M_{f_{p_1}}
       +          \nu  \Vert K_{n_2,p_2}^{\spQ_{\alpha_2}} \Vert_{\infty} 
         \mbox{ } \Vert M_{n_1,p_1}^{\spQ_{\alpha_1}} \Vert_{\infty}
       +       r  \nu  M_{f_{p_2}}.
\end{align*} }
\cite[Lemma 7]{galerkin_gbsp} implies the existence of a constant $Q_{p_1,p_2}$, independent of $n_1$, for which
\begin{equation} \label{eq:poly95}
\Vert R_{\bf n,p}^{\spQQ_{\balpha}} \Vert \leq Q_{p_1,p_2}.
\end{equation}
As a result, we can state a theorem which could be seen as the two-dimensional version of \cite[Theorem 5]{galerkin_gbsp}.
\begin{theorem} \label{th:poly18}
The sequence of matrices $ \{ A_{\bf n,p}^{\spQQ_{\balpha}} \} $ is distributed, in the sense of the eigenvalues, like the function $ g_{p_1,p_2} : [-\pi,\pi]^2 \rightarrow \mathbb{R} $
\begin{equation} \label{eq:poly96}
g_{p_1,p_2} := \frac{1}{\nu} h_{p_2} \otimes f_{p_1} + \nu f_{p_2} \otimes h_{p_1}.
\end{equation}
\end{theorem}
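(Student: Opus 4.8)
The plan is to establish the eigenvalue distribution of $\{A_{\bf n,p}^{\spQQ_{\balpha}}\}$ by a perturbation argument: write $A_{\bf n,p}^{\spQQ_{\balpha}}$ as the sum of the structured Toeplitz-type part $B_{\bf n,p}^{\spQQ_{\balpha}}$ (which already has the desired distribution by \eqref{eq:poly93}) plus correction terms, and then show all corrections are negligible in the appropriate sense. Concretely, from \eqref{eq:poly88} and the decomposition \eqref{eq:poly89} we have
\begin{equation*}
A_{\bf n,p}^{\spQQ_{\balpha}} = B_{\bf n,p}^{\spQQ_{\balpha}} + R_{\bf n,p}^{\spQQ_{\balpha}} + \frac{\beta_1}{\nu n_1} \widehat{H}_{\bf n,p}^{\spQQ_{\balpha}} + \frac{\beta_2}{n_1} \widetilde{H}_{\bf n,p}^{\spQQ_{\balpha}} + \frac{\gamma}{\nu (n_1)^2} M_{\bf n,p}^{\spQQ_{\balpha}}.
\end{equation*}

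First I would handle the low-rank correction: by the rank estimate $\rank(R_{\bf n,p}^{\spQQ_{\balpha}}) = o((n_1+p_1-2)(\nu n_1 + p_2-2))$ established above, together with the uniform norm bound \eqref{eq:poly95}, the sequence $\{R_{\bf n,p}^{\spQQ_{\balpha}}\}$ is a bounded sequence whose rank is $o(N_{\bf n})$ where $N_{\bf n}$ is the matrix dimension. Such a sequence is a ``zero-distributed'' perturbation. Second I would handle the two $\widehat H,\widetilde H$ terms and the $M$ term: using \cite[Lemma 7]{galerkin_gbsp} the matrices $\widehat{H}_{\bf n,p}^{\spQQ_{\balpha}}, \widetilde{H}_{\bf n,p}^{\spQQ_{\balpha}}, M_{\bf n,p}^{\spQQ_{\balpha}}$ have spectral norms bounded uniformly in $n_1$ (they are tensor products of matrices with bounded $\infty$-norms, invoking Lemma \ref{lem:poly01}.4), so after multiplication by $1/n_1$ or $1/n_1^2$ they tend to zero in norm; hence their sum also forms a zero-distributed sequence. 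Third, one assembles: the sum of a sequence distributed like $g_{p_1,p_2}$ and a zero-distributed sequence (whether in the low-rank or small-norm sense) is still distributed like $g_{p_1,p_2}$. This is the standard GLT/approximating-class-of-sequences stability result, and it is presumably the two-dimensional analogue of what \cite[Theorem 5]{galerkin_gbsp} uses in dimension one; I would cite that machinery rather than reprove it.

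The only genuine content beyond bookkeeping is verifying that the correction terms really are negligible in a single coherent framework. The subtlety is that $R_{\bf n,p}^{\spQQ_{\balpha}}$ is small in rank but \emph{not} small in norm, whereas the $H$ and $M$ terms are small in norm but full rank; the clean way to combine them is to appeal to the notion of an approximating class of sequences (a.c.s.): one shows $\{B_{\bf n,p}^{\spQQ_{\balpha}}\}$ is an a.c.s.\ for $\{A_{\bf n,p}^{\spQQ_{\balpha}}\}$ by splitting each correction into a small-norm part and a small-rank part (here trivially: the $H,M$ terms are all small-norm, $R$ is all small-rank), and then invoke the fact that a.c.s.-convergence preserves eigenvalue distribution.

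\textbf{The main obstacle} I anticipate is not mathematical depth but making sure the a.c.s./zero-distributed argument is correctly stated for a two-index sequence reindexed by the single parameter $n_1$ (with $n_2 = \nu n_1$), so that ``$o$'' of the dimension is the right smallness notion; once the reduction to a genuine one-parameter matrix sequence is in place (which the text already arranges via the fixed ratio $\nu$), the rest is a direct application of \eqref{eq:poly93}, \eqref{eq:poly95}, the rank bound, and \cite[Lemma 7]{galerkin_gbsp}.

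\begin{proof}
By \eqref{eq:poly88} and the decomposition \eqref{eq:poly89} we can write
\begin{equation*}
A_{\bf n,p}^{\spQQ_{\balpha}} = B_{\bf n,p}^{\spQQ_{\balpha}} + Z_{\bf n,p}^{\spQQ_{\balpha}}, \qquad
Z_{\bf n,p}^{\spQQ_{\balpha}} := R_{\bf n,p}^{\spQQ_{\balpha}} + \frac{\beta_1}{\nu n_1} \widehat{H}_{\bf n,p}^{\spQQ_{\balpha}} + \frac{\beta_2}{n_1} \widetilde{H}_{\bf n,p}^{\spQQ_{\balpha}} + \frac{\gamma}{\nu (n_1)^2} M_{\bf n,p}^{\spQQ_{\balpha}}.
\end{equation*}
Since the ratio $\nu = n_2/n_1$ is fixed, $\{A_{\bf n,p}^{\spQQ_{\balpha}}\}$ is a genuine one-parameter matrix sequence in $n_1$, of dimension $N_{\bf n} := (n_1+p_1-2)(\nu n_1 + p_2 - 2)$.

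By \eqref{eq:poly93} and Lemma \ref{lem:poly02}, $\{B_{\bf n,p}^{\spQQ_{\balpha}}\} \sim_{\lambda} g_{p_1,p_2}$, with $g_{p_1,p_2}$ as in \eqref{eq:poly96}. It remains to show that $\{Z_{\bf n,p}^{\spQQ_{\balpha}}\}$ is a zero-distributed sequence and that $\{B_{\bf n,p}^{\spQQ_{\balpha}}\}$ is an approximating class of sequences for $\{A_{\bf n,p}^{\spQQ_{\balpha}}\}$.

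We split $Z_{\bf n,p}^{\spQQ_{\balpha}} = Z'_{\bf n,p} + Z''_{\bf n,p}$ with $Z'_{\bf n,p} := R_{\bf n,p}^{\spQQ_{\balpha}}$ and $Z''_{\bf n,p} := \frac{\beta_1}{\nu n_1} \widehat{H}_{\bf n,p}^{\spQQ_{\balpha}} + \frac{\beta_2}{n_1} \widetilde{H}_{\bf n,p}^{\spQQ_{\balpha}} + \frac{\gamma}{\nu (n_1)^2} M_{\bf n,p}^{\spQQ_{\balpha}}$. For the small-rank part, the computation preceding \eqref{eq:poly95} gives $\rank(Z'_{\bf n,p}) = o(N_{\bf n})$ as $n_1 \rightarrow \infty$, while \eqref{eq:poly95} gives the uniform bound $\Vert Z'_{\bf n,p} \Vert \leq Q_{p_1,p_2}$. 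For the small-norm part, the matrices $\widehat{H}_{\bf n,p}^{\spQQ_{\balpha}}, \widetilde{H}_{\bf n,p}^{\spQQ_{\balpha}}, M_{\bf n,p}^{\spQQ_{\balpha}}$ are tensor products of the one-dimensional matrices $M_{n_i,p_i}^{\spQ_{\alpha_i}}, K_{n_i,p_i}^{\spQ_{\alpha_i}}, H_{n_i,p_i}^{\spQ_{\alpha_i}}$, each of which is banded with bandwidth and entries bounded independently of $n_i$; hence their spectral norms are bounded independently of $n_1$ (arguing as in \cite[Lemma 7]{galerkin_gbsp} together with Lemma \ref{lem:poly01}.4), and therefore
\begin{equation*}
\Vert Z''_{\bf n,p} \Vert \leq \frac{|\beta_1|}{\nu n_1} \Vert \widehat{H}_{\bf n,p}^{\spQQ_{\balpha}} \Vert + \frac{|\beta_2|}{n_1} \Vert \widetilde{H}_{\bf n,p}^{\spQQ_{\balpha}} \Vert + \frac{\gamma}{\nu (n_1)^2} \Vert M_{\bf n,p}^{\spQQ_{\balpha}} \Vert \longrightarrow 0, \qquad n_1 \rightarrow \infty.
\end{equation*}

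Consequently, writing $A_{\bf n,p}^{\spQQ_{\balpha}} = B_{\bf n,p}^{\spQQ_{\balpha}} + Z''_{\bf n,p} + Z'_{\bf n,p}$, the sequence $\{B_{\bf n,p}^{\spQQ_{\balpha}} + Z''_{\bf n,p}\}$ differs from $\{B_{\bf n,p}^{\spQQ_{\balpha}}\}$ by a sequence vanishing in spectral norm, and then $\{A_{\bf n,p}^{\spQQ_{\balpha}}\}$ is obtained from it by adding the sequence $\{Z'_{\bf n,p}\}$, which is uniformly bounded in norm and of rank $o(N_{\bf n})$. Both operations preserve the eigenvalue distribution: the first because a perturbation vanishing in spectral norm does not change the asymptotic eigenvalue distribution (every test function $F$ being uniformly continuous on compacta and the spectra being uniformly bounded), and the second by the standard approximating-class-of-sequences argument for sequences whose rank is negligible with respect to the matrix size. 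These are exactly the ingredients underlying \cite[Theorem 5]{galerkin_gbsp} in the one-dimensional case, and they carry over verbatim here once the reindexing by the single parameter $n_1$ is fixed. Therefore
\begin{equation*}
\{ A_{\bf n,p}^{\spQQ_{\balpha}} \} \sim_{\lambda} g_{p_1,p_2},
\end{equation*}
which is the assertion.
\end{proof}
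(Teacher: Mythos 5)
Your proposal is correct and follows essentially the same route as the paper: the paper's proof consists of a one-line deferral to the analogous polynomial-case argument (\cite[Theorem 18]{GMPSS14}), and what you have written is precisely that argument carried out explicitly, using the decomposition \eqref{eq:poly88}--\eqref{eq:poly89}, the distribution result \eqref{eq:poly93}, the rank estimate, and the norm bound \eqref{eq:poly95} that the paper prepares immediately before the theorem. The only mildly informal point is your justification for why a small-spectral-norm perturbation of a (Hermitian) distributed sequence preserves the distribution, but since you explicitly defer to the same a.c.s.\ machinery underlying \cite[Theorem 5]{galerkin_gbsp}, this matches the paper's intent.
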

\begin{proof}
The proof is analogous to \cite[Theorem 18]{GMPSS14}, since matrices and functions have the same properties, and the same relations hold between them. 
\end{proof}
In the non-nested case, Theorem \ref{th:poly18} can be rendered as:
%
%
\begin{theorem} \label{th:poly18-nonnested}
The sequence of matrices $ \{ A_{\bf n,p}^{\spQQ_{\nn \balpha}} \} $ is distributed, in the sense of the eigenvalues, like the function $ g_{p_1,p_2}^{\spQ_{\alpha}} : [-\pi,\pi]^2 \rightarrow \mathbb{R} $
\begin{equation} \label{eq:poly96-nonnested}
g_{p_1,p_2}^{\spQ_{\alpha}} := \frac{1}{\nu} h_{p_2}^{\spQ_{\alpha}} \otimes f_{p_1}^{\spQ_{\alpha}} 
                             + \nu f_{p_2}^{\spQ_{\alpha}} \otimes h_{p_1}^{\spQ_{\alpha}}.
\end{equation}
\end{theorem}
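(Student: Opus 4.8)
The plan is to mirror the proof of Theorem~\ref{th:poly18}, swapping the ``nested'' ingredients for their ``non-nested'' counterparts throughout. The key observation is that every structural fact used in the nested case has an exact analogue in the non-nested setting: in particular \cite[Lemmas 6 and 8--11]{galerkin_gbsp} supply the Toeplitz identities $ B_{n_i,p_i}^{\spQ_{n_i\alpha_i}} = T_{n_i+p_i-2}(f_{p_i}^{\spQ_{\alpha_i}}) $ and $ C_{n_i,p_i}^{\spQ_{n_i\alpha_i}} = T_{n_i+p_i-2}(h_{p_i}^{\spQ_{\alpha_i}}) $ (note the limiting symbols $ f_{p_i}^{\spQ_{\alpha_i}}, h_{p_i}^{\spQ_{\alpha_i}} $ rather than $ f_{p_i}, h_{p_i} $, because in the non-nested case the shape parameter $ \alpha_i $ stays fixed as $ n_i \to \infty $ instead of being rescaled), while the rank bounds \cite[Eq. (68) and (76)]{galerkin_gbsp} and the uniform norm bounds \cite[Lemma 7]{galerkin_gbsp} hold verbatim. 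The normality of $ K_{n_i,p_i}^{\spQ_{n_i\alpha_i}}, H_{n_i,p_i}^{\spQ_{n_i\alpha_i}}, M_{n_i,p_i}^{\spQ_{n_i\alpha_i}}, B_{n_i,p_i}^{\spQ_{n_i\alpha_i}}, C_{n_i,p_i}^{\spQ_{n_i\alpha_i}} $ is also unaffected.

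Concretely, first I would introduce the non-nested analogues $ \widehat{K}_{\bf n,p}^{\spQQ_{\nn\balpha}}, \widetilde{K}_{\bf n,p}^{\spQQ_{\nn\balpha}}, \widehat{H}, \widetilde{H}, M $ defined exactly as in \eqref{eq:poly86} but with each one-dimensional factor carrying the superscript $ \spQ_{n_i\alpha_i} $, together with the decomposition \eqref{eq:poly89} into $ B + R $ pieces built from $ B_{n_i,p_i}^{\spQ_{n_i\alpha_i}}, R_{n_i,p_i}^{\spQ_{n_i\alpha_i}}, C_{n_i,p_i}^{\spQ_{n_i\alpha_i}}, S_{n_i,p_i}^{\spQ_{n_i\alpha_i}} $. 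Then, applying Lemma~\ref{lem:poly02} to the Toeplitz identities above gives $ \widehat{B}_{\bf n,p}^{\spQQ_{\nn\balpha}} = T_{n_2+p_2-2,\,n_1+p_1-2}(h_{p_2}^{\spQ_{\alpha_2}} \otimes f_{p_1}^{\spQ_{\alpha_1}}) $ and similarly for $ \widetilde{B} $, so that by linearity of the two-level Toeplitz operator $ B_{\bf n,p}^{\spQQ_{\nn\balpha}} = T_{n_2+p_2-2,\,n_1+p_1-2}\bigl( \tfrac{1}{\nu} h_{p_2}^{\spQ_{\alpha_2}} \otimes f_{p_1}^{\spQ_{\alpha_1}} + \nu f_{p_2}^{\spQ_{\alpha_2}} \otimes h_{p_1}^{\spQ_{\alpha_1}} \bigr) $. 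By Theorem~\ref{th:poly05} this sequence is distributed like $ g_{p_1,p_2}^{\spQ_{\alpha}} $ of \eqref{eq:poly96-nonnested}. Next the same rank computation as in the nested case — using Lemma~\ref{lem:poly01}(3) and the rank bounds — shows $ \rank(R_{\bf n,p}^{\spQQ_{\nn\balpha}}) = o\bigl((n_1+p_1-2)(\nu n_1 + p_2 - 2)\bigr) $, i.e. it is a ``small-rank'' perturbation relative to the matrix size; and the norm estimate of the same shape as \eqref{eq:poly95} shows $ \Vert R_{\bf n,p}^{\spQQ_{\nn\balpha}} \Vert \leq Q_{p_1,p_2}' $ uniformly in $ n_1 $, so $ R_{\bf n,p}^{\spQQ_{\nn\balpha}} $ is also a ``small-norm modulo small-rank'' term, and likewise the contributions $ \tfrac{\beta_1}{\nu n_1}\widehat{H}, \tfrac{\beta_2}{n_1}\widetilde{H}, \tfrac{\gamma}{\nu n_1^2}M $ are uniformly bounded in norm and vanish in spectral contribution. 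Finally, invoking the perturbation machinery behind \cite[Theorem 18]{GMPSS14} / Theorem~\ref{th:poly18} (distribution is preserved under a small-rank-plus-small-norm perturbation), one concludes $ \{ A_{\bf n,p}^{\spQQ_{\nn\balpha}} \} \sim_\lambda g_{p_1,p_2}^{\spQ_{\alpha}} $.

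Given all of this, the cleanest write-up simply says the proof is identical to that of Theorem~\ref{th:poly18} once the superscripts $ \spQ_{\alpha_i} \leadsto \spQ_{n_i\alpha_i} $ and the limit symbols $ f_{p_i} \leadsto f_{p_i}^{\spQ_{\alpha_i}} $, $ h_{p_i} \leadsto h_{p_i}^{\spQ_{\alpha_i}} $ are made. The one place that genuinely needs a word of justification — and which I expect to be the only real obstacle — is checking that the Toeplitz identities for $ B_{n_i,p_i}^{\spQ_{n_i\alpha_i}} $ and $ C_{n_i,p_i}^{\spQ_{n_i\alpha_i}} $ in the non-nested regime do hold with the fixed-parameter symbols $ f_{p_i}^{\spQ_{\alpha_i}}, h_{p_i}^{\spQ_{\alpha_i}} $, rather than with the $ n_i $-independent limits $ f_{p_i}, h_{p_i} $; this is precisely the content of \cite[Lemmas 10 and 11]{galerkin_gbsp}, so it is available, but it is the substantive point that distinguishes this theorem from Theorem~\ref{th:poly18} and should be cited explicitly. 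Everything else (linearity of $ T_{m_1,m_2} $, Lemma~\ref{lem:poly01}(3), the uniform norm bound from \cite[Lemma 7]{galerkin_gbsp}, and the distribution-stability under small perturbations) carries over unchanged.

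\begin{proof}
The proof is identical to that of Theorem~\ref{th:poly18}, replacing each one-dimensional factor $ K_{n_i,p_i}^{\spQ_{\alpha_i}}, H_{n_i,p_i}^{\spQ_{\alpha_i}}, M_{n_i,p_i}^{\spQ_{\alpha_i}}, B_{n_i,p_i}^{\spQ_{\alpha_i}}, R_{n_i,p_i}^{\spQ_{\alpha_i}}, C_{n_i,p_i}^{\spQ_{\alpha_i}}, S_{n_i,p_i}^{\spQ_{\alpha_i}} $ by its non-nested counterpart with superscript $ \spQ_{n_i\alpha_i} $. The only substantive change is that, in the non-nested regime, \cite[Lemmas 10 and 11]{galerkin_gbsp} give $ B_{n_i,p_i}^{\spQ_{n_i\alpha_i}} = T_{n_i+p_i-2}(f_{p_i}^{\spQ_{\alpha_i}}) $ and $ C_{n_i,p_i}^{\spQ_{n_i\alpha_i}} = T_{n_i+p_i-2}(h_{p_i}^{\spQ_{\alpha_i}}) $, with the fixed-parameter symbols $ f_{p_i}^{\spQ_{\alpha_i}}, h_{p_i}^{\spQ_{\alpha_i}} $ in place of the limits $ f_{p_i}, h_{p_i} $. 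Applying Lemma~\ref{lem:poly02} and the linearity of the two-level Toeplitz operator then yields
$$ B_{\bf n,p}^{\spQQ_{\nn\balpha}} = T_{n_2+p_2-2,\,n_1+p_1-2}\!\left( \tfrac{1}{\nu}\, h_{p_2}^{\spQ_{\alpha_2}} \otimes f_{p_1}^{\spQ_{\alpha_1}} + \nu\, f_{p_2}^{\spQ_{\alpha_2}} \otimes h_{p_1}^{\spQ_{\alpha_1}} \right) = T_{n_2+p_2-2,\,n_1+p_1-2}(g_{p_1,p_2}^{\spQ_{\alpha}}), $$
so $ \{ B_{\bf n,p}^{\spQQ_{\nn\balpha}} \} \sim_\lambda g_{p_1,p_2}^{\spQ_{\alpha}} $ by Theorem~\ref{th:poly05}. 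The rank estimate for $ R_{\bf n,p}^{\spQQ_{\nn\balpha}} $ follows word for word as in the nested case from Lemma~\ref{lem:poly01} and \cite[Eq. (68) and (76)]{galerkin_gbsp}, giving $ \rank(R_{\bf n,p}^{\spQQ_{\nn\balpha}}) = o\bigl((n_1+p_1-2)(\nu n_1+p_2-2)\bigr) $ as $ n_1 \to \infty $, while \cite[Lemma 7]{galerkin_gbsp} gives a bound $ \Vert R_{\bf n,p}^{\spQQ_{\nn\balpha}} \Vert \leq Q_{p_1,p_2}' $ uniform in $ n_1 $; the remaining terms $ \tfrac{\beta_1}{\nu n_1}\widehat{H}_{\bf n,p}^{\spQQ_{\nn\balpha}}, \tfrac{\beta_2}{n_1}\widetilde{H}_{\bf n,p}^{\spQQ_{\nn\balpha}}, \tfrac{\gamma}{\nu n_1^2} M_{\bf n,p}^{\spQQ_{\nn\balpha}} $ are uniformly bounded in norm and vanish asymptotically. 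Hence $ A_{\bf n,p}^{\spQQ_{\nn\balpha}} $ is a perturbation of $ B_{\bf n,p}^{\spQQ_{\nn\balpha}} $ of the same ``small-rank plus small-norm'' type handled in \cite[Theorem 18]{GMPSS14}, and the conclusion $ \{ A_{\bf n,p}^{\spQQ_{\nn\balpha}} \} \sim_\lambda g_{p_1,p_2}^{\spQ_{\alpha}} $ follows.
\end{proof}
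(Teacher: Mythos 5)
Your proposal is correct and follows exactly the route the paper intends: the paper offers no separate proof for the non-nested theorem (it is stated as the non-nested rendering of Theorem~\ref{th:poly18}, whose own proof is simply declared analogous to \cite[Theorem 18]{GMPSS14}), and your argument is precisely that analogy carried out with the substitution $\alpha_i \mapsto n_i\alpha_i$, correctly identifying the one substantive change, namely that the Toeplitz symbols become the fixed-parameter functions $f_{p_i}^{\spQ_{\alpha_i}}, h_{p_i}^{\spQ_{\alpha_i}}$ rather than their polynomial limits. Your write-up is in fact more detailed than the paper's.
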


\end{document}